\newcommand{\be}{\begin{equation}}
\newcommand{\ee}{\end{equation}}
\newcommand{\bea}{\begin{eqnarray}}
\newcommand{\eea}{\end{eqnarray}}
\newcommand{\ba}{\begin{array}}
\newcommand{\ea}{\end{array}}
\newcommand{\bc}{\begin{center}}
\newcommand{\ec}{\end{center}}
\newcommand{\ben}{\begin{enumerate}}
\newcommand{\een}{\end{enumerate}}
\newcommand{\bfi}{\begin{figure}}
\newcommand{\efi}{\end{figure}}
\newcommand{\bq}{\begin{quote}}
\newcommand{\eq}{\end{quote}}
\newcommand{\bqu}{\begin{quotation}}
\newcommand{\equ}{\end{quotation}}
\newenvironment{emphit}{\begin{itemize}}{\end{itemize}}
\newcommand{\bemp}{\begin{emphit}}
\newcommand{\eemp}{\end{emphit}}
\newcommand{\bt}{\begin{tabular}}
\newcommand{\et}{\end{tabular}}
\newtheorem{myth}{Theorem}[section]
\newtheorem{mylem}{Lemma}[section]
\newtheorem{mycor}{Corollary}[section]
\newtheorem{mypro}{Proposition}[section]
\newtheorem{mydef}{Definition}[section]
\newtheorem{myrem}{Remark}[section]
\begin{document}
\date{}
\title{Holomorphic Structure of Middle Bol Loops
\footnote{2010 Mathematics Subject Classification. Primary 20N02, 20N05}
\thanks{{\bf Keywords and Phrases :} holomorph of loop, Bol loops, middle Bol loops}}
\author{T. G. Jaiy\'e\d ol\'a\thanks{All correspondence to be addressed to this author.} \\
Department of Mathematics,\\
Obafemi Awolowo University,\\
Ile Ife 220005, Nigeria.\\
jaiyeolatemitope@yahoo.com\\tjayeola@oauife.edu.ng \and
S. P. David \\
Department of Mathematics,\\
Obafemi Awolowo University,\\
Ile Ife 220005, Nigeria.\\
davidsp4ril@yahoo.com\and
E. Ilojide \\
Department of Mathematics,\\
Federal University of Agriculture, \\
Abeokuta 110101, Nigeria.\\
emmailojide@yahoo.com\\ilojidee@unaab.edu.ng\and
Y. T. Oyebo\\
Department of Mathematics, \\Lagos State University, \\Ojo, Lagos State, Nigeria\\
oyeboyt@yahoo.com\\yakub.oyebo@lasu.edu.ng}\maketitle
\begin{abstract}
A loop $(Q,\cdot,\backslash,/)$ is called a middle Bol loop if it obeys the identity $x(yz\backslash x)=(x/z)(y\backslash x)$.
To every right (left) Bol loop corresponds a middle Bol loop via an isostrophism. In this paper, the structure of the holomorph of a middle Bol loop is explored. For some special types of automorphisms, the holomorph of a commutative loop is shown to be a commutative middle Bol loop if and only if the loop is a middle Bol loop and its automorphism group is abelian and a subgroup of both the group of middle regular mappings and the right multiplication group. It was found that commutativity (flexibility) is a necessary and sufficient condition for holomorphic invariance under the existing isostrophy between middle Bol loops and the corresponding right (left) Bol loops. The right combined holomorph of a middle Bol loop and its corresponding right (left) Bol loop was shown to be equal to the holomorph of the middle Bol loop if and only if the automorphism group is abelian and a subgroup of the multiplication group of the middle Bol loop. The obedience of an identity dependent on automorphisms was found to be a necessary and sufficient condition the left combined holomorph of a middle Bol loop and its corresponding left Bol loop to be equal to the holomorph of the middle Bol loop.
\end{abstract}
\section{Introduction}
\paragraph{}
Let $G$ be a non-empty set. Define a binary operation ($\cdot $) on
$G$. If $x\cdot y\in G$ for all $x, y\in G$, then the pair $(G, \cdot )$
is called a \textit{groupoid} or \textit{magma}.

If for any $a,b\in G$, each of the equations:
\begin{displaymath}
a\cdot x=b\qquad\textrm{and}\qquad y\cdot a=b
\end{displaymath}
has unique solutions in $G$ for $x$ and $y$ respectively, then $(G,
\cdot )$ is called a \textit{quasigroup}.

If there exists a unique element $e\in G$ called the
\textit{identity element} such that for all $x\in G$, $x\cdot
e=e\cdot x=x$, $(G, \cdot )$ is called a \textit{loop}. We write
$xy$ instead of $x\cdot y$, and stipulate that ($\cdot$) has lower
priority than juxtaposition among factors to be multiplied. For
instance, $x\cdot yz$ stands for $x(yz)$.

It can now be seen that a groupoid $(G, \cdot )$ is a quasigroup if
its left and right translation mappings are bijections or
permutations. Since the left and right translation mappings of a
loop are bijective, then the inverse mappings $L_x^{-1}$ and
$R_x^{-1}$ exist. Let
\begin{displaymath}
x\backslash y =yL_x^{-1}=y\mathcal{L}_x=x\mathcal{R}_y\qquad\textrm{and}\qquad
x/y=xR_y^{-1}=x\mathbb{R}_y=y\mathbb{L}_x
\end{displaymath}and note that
\begin{displaymath}
x\backslash y =z\Longleftrightarrow x\cdot
z=y\qquad\textrm{and}\qquad x/y=z\Longleftrightarrow z\cdot y=x.
\end{displaymath}
Hence, $(G, \backslash )$ and $(G, /)$ are also quasigroups. Using
the operations ($\backslash$) and ($/$), the definition of a loop
can be stated as follows.

\begin{mydef}\label{0:1}\textrm{
A \textit{loop} $(G,\cdot ,/,\backslash ,e)$ is a set $G$ together
with three binary operations ($\cdot $), ($/$), ($\backslash$) and
one nullary operation $e$ such that
\begin{description}
\item[(i)] $x\cdot (x\backslash y)=y$, $(y/x)\cdot x=y$ for all
$x,y\in G$,
\item[(ii)] $x\backslash (x\cdot y)=y$, $(y\cdot x)/x=y$ for all
$x,y\in G$ and
\item[(iii)] $x\backslash x=y/y$ or $e\cdot x=x$ for all
$x,y\in G$.
\end{description}}
\end{mydef}
We also stipulate that ($/$) and ($\backslash$) have higher priority
than ($\cdot $) among factors to be multiplied. For instance,
$x\cdot y/z$ and $x\cdot y\backslash z$ stand for $x(y/z)$ and
$x\cdot (y\backslash z)$ respectively.

In a loop $(G,\cdot )$ with identity element $e$, the \textit{left
inverse element} of $x\in G$ is the element $xJ_\lambda
=x^\lambda\in G$ such that
\begin{displaymath}
x^\lambda\cdot x=e
\end{displaymath}
while the \textit{right inverse element} of $x\in G$ is the element
$xJ_\rho =x^\rho\in G$ such that
\begin{displaymath}
x\cdot x^\rho=e.
\end{displaymath}
The group of all permutations on $G$ is called the permutation group of $G$ and denoted by $SYM(G)$.
The groups ${\cal M}_\rho(L,\cdot )=\Big<\{R_x,R_x^{-1}~:~x\in G\}\Big>$ and ${\cal M}(L,\cdot )=\Big<\{R_x,R_x^{-1},L_x,L_x^{-1}~:~x\in G\}\Big>$ are called the right multiplication group and multiplication group of $(G,\cdot )$ and, ${\cal M}_\rho(L,\cdot )\le {\cal M}(G,\cdot )\le SYM(G)$.

For an overview of the theory of
loops, readers may check \cite{davidref:7,davidref:8,davidref:9,davidref:16,davidref:10,davidref:12,davidref:21,davidref:22}.

A loop satisfying the identical relation
 \begin{equation}\label{davideq1}
 (xy\cdot z)y=x(yz\cdot y)
\end{equation}
is called a right Bol loop. A loop satisfying the identical relation
\begin{equation}\label{davideq2}
(x\cdot yx)z=x(y\cdot xz)
\end{equation}
is called a left Bol loop.

A loop $(Q,\cdot)$ is called a middle Bol if it satisfies the identity
\begin{equation}\label{davideq2.1}
x(yz\backslash x) = (x/z)(y\backslash x)
\end{equation}
It is known that the identity \eqref{davideq2.1} is universal under loop isotopy and that the universality of \eqref{davideq2.1} implies the power associativity of the middle Bol loops (Grecu and Syrbu \cite{davidref:65}). Furthermore, \eqref{davideq2.1} is a necessary and sufficient condition for the universality of the anti-automorphic inverse property (Syrbu \cite{davidref:6}). They were originally introduced in 1967 by Belousov \cite{davidref:24} and were later considered in 1971 by Gvaramiya \cite{davidref:23}, who proved that a loop $(Q,\circ)$ is middle Bol if and only if there exists a right Bol loop $(Q,\cdot)$ such that
\begin{equation}\label{davideq2.2}
x\circ y=(y\cdot xy^{-1})y,~\textrm{for every}~x,y\in Q.
\end{equation}
This result of Gvaramiya \cite{davidref:23} is formally stated below:
\begin{myth}\label{davidthm1}
If $(Q,\cdot)$ is a left (right) Bol loop then the groupoid $(Q,\circ)$, where $x\circ y=y(y^{-1}x\cdot y)$ (respectively, $x\circ y=(y\cdot xy^{-1})y$), for all $x,y \in Q$ , is a middle Bol loop and, conversely, if $(Q,\circ)$ is a middle Bol loop then there exists a left(right) Bol loop $(Q,\cdot)$ such that $x\circ y=y(y^{-1}x\cdot y)$ (respectively, $x\circ y=(y\cdot xy^{-1})y$), for all $x,y \in Q$.
\end{myth}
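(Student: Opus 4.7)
The plan is to treat the correspondence $(Q,\cdot)\leftrightarrow(Q,\circ)$ as an explicit (principal-type) isotopy and to verify both directions by translating the middle Bol identity \eqref{davideq2.1} into the left Bol identity \eqref{davideq2} (and, symmetrically, into the right Bol identity \eqref{davideq1}) under this isotopy. I will focus on the left Bol version $x\circ y=y(y^{-1}x\cdot y)$; the right Bol case follows by the mirror argument, working with the opposite loop.

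\noindent\textbf{Forward direction.} Assume $(Q,\cdot)$ is a left Bol loop with identity $e$. Left Bol loops are power-associative and enjoy the left inverse property, so $y^{-1}$ is a well defined two-sided inverse and the formula for $\circ$ is unambiguous. The identity axioms $x\circ e=x=e\circ x$ are immediate by direct substitution. For unique solvability, the right translation $R^{\circ}_y$ is a composition of translations of $(Q,\cdot)$ and so is a bijection; the left translation $L^{\circ}_x$ is shown to be bijective by solving $x\circ y=c$ for $y$ after unwinding the nested product via \eqref{davideq2} and the left inverse property. The core step is then to verify \eqref{davideq2.1} for $\circ$: one computes the right and left divisions of $(Q,\circ)$ explicitly in $(\cdot)$-terms from the definition of $\circ$, substitutes these expressions into both sides of the middle Bol identity, and reduces each side to a common $(\cdot)$-word by repeated application of \eqref{davideq2} together with the left inverse property. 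I expect this reduction to be the principal obstacle: the expanded identity is lengthy, and the bracket bookkeeping demands several careful applications of the left Bol and left inverse properties before the two sides collapse to the same expression.

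\noindent\textbf{Converse.} For the converse, given a middle Bol loop $(Q,\circ)$, I would recover $(\cdot)$ by inverting the defining relation. Setting $u=y^{-1}x$ so that $x=yu$, the relation $x\circ y=y(y^{-1}x\cdot y)$ rearranges to $(yu)\circ y=y(u\cdot y)$, which suggests defining $u\cdot y$ implicitly by the requirement $y(u\cdot y)=(yu)\circ y$. To make this unambiguous one needs two-sided inverses in $(Q,\circ)$, and these are available because the middle Bol identity forces the anti-automorphic inverse property (mentioned in the excerpt after \eqref{davideq2.1}) together with power-associativity (Grecu--Syrbu). Once $(\cdot)$ has been defined in this way, checking that it is a loop reduces to inverting the translation formulas of the forward direction, while the left Bol identity \eqref{davideq2} for $(Q,\cdot)$ follows by running the forward computation in reverse, with \eqref{davideq2.1} on $(Q,\circ)$ justifying each bracket rearrangement. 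This completes the equivalence and shows the Bol companion is uniquely determined by $(Q,\circ)$.
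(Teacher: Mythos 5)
This theorem is not proved in the paper at all: it is Gvaramiya's result, quoted from \cite{davidref:23} (with the explicit recovery $x\cdot y=x/\!/y^{-1}$ recorded in the Remark and in Kuznetsov's item 2), so there is no internal proof to compare against. Judged on its own terms, your text is a plan rather than a proof. The entire mathematical content of the forward direction is the reduction of the middle Bol identity for $(Q,\circ)$ to the left Bol identity for $(Q,\cdot)$, and you explicitly defer it (``I expect this reduction to be the principal obstacle\dots''). Nothing in the proposal exhibits the expressions for the divisions of $(Q,\circ)$ in terms of $(\cdot)$, nor the sequence of applications of \eqref{davideq2} and the left inverse property that makes the two sides of \eqref{davideq2.1} coincide; until that computation is actually carried out, the claim is unverified.

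The converse has a more concrete defect: you propose to define $u\cdot y$ ``implicitly by the requirement $y(u\cdot y)=(yu)\circ y$'', but both sides of that requirement already involve the operation $(\cdot)$ being defined (the outer product $y(\,\cdot\,)$ and the factor $yu$), so as stated the definition is circular. The workable route, which the paper itself points to, is to set $x\cdot y:=x/\!/y^{-1}$, where $/\!/$ is the right division of $(Q,\circ)$ and $y^{-1}$ is the (two-sided, since $x^\lambda=x^\rho$ in a middle Bol loop) inverse in $(Q,\circ)$; one must then prove that this groupoid is a loop satisfying \eqref{davideq2} and that it returns $\circ$ via $x\circ y=y(y^{-1}x\cdot y)$ --- again a computation your sketch does not perform, and ``running the forward computation in reverse'' is not available since the forward computation was never written down. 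Finally, your closing claim that the Bol companion is \emph{uniquely} determined by $(Q,\circ)$ goes beyond the statement (which asserts only existence) and is not supported by anything in your argument; it should either be proved separately or dropped.
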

\begin{myrem}
Theorem~\ref{davidthm1} implies that if $(Q,\cdot)$ is a left Bol loop and $(Q,\circ)$ is the corresponding middle Bol loop then $x\circ y=x/y^{-1}$ and $x\cdot y=x//y^{-1}$ , where "$/$" ("$//$") is the right division in $(Q,\cdot)$ (respectively, in $(Q,\circ)$). Similarly, if $(Q,\cdot)$ is a right Bol loop and $(Q,\circ)$ is the corresponding middle Bol loop then $x\circ y=y^{-1}\backslash y$ and $x\cdot y=y//x^{-1}$ , where "$\backslash$" ("$//$") is the left (right) division in $(Q,\cdot)$ (respectively, in $(Q,\circ)$). Hence, middle Bol loops are isostrophs of left and right Bol loops.\\

If $(Q,\circ)$ is a middle Bol loop and $(Q,\cdot)$ is the corresponding left Bol loop, then $(Q,\ast)$, where $x\ast y=y\cdot x$, for every $x,y\in Q$, is the corresponding right Bol loop for $(Q,\circ)$. So, $(Q,\cdot)$ is a left Bol loop, $(Q,\ast)$ is a right Bol loop and
$$x\circ y=y(y^{-1} x\cdot y)=[y\ast(x\ast y^{-1})]\ast y,$$
for every $x,y\in Q$.
\end{myrem}

After then, middle Bol loops resurfaced in literature not until 1994 and 1996 when Syrbu \cite{davidref:4,davidref:5} considered them in-relation to the universality of the elasticity law.

In 2003, Kuznetsov (\cite{davidref:22}), while studying gyrogroups (a special class of Bol loops) established some algebraic properties of middle Bol loops and designed a method of constructing a middle Bol loop from a gyrogroup. According to him, in a middle Bol loop $(Q,\cdot)$ with identity element $e$, the following are true.
\begin{enumerate}
    \item The left inverse element $x^\lambda$ and the right inverse $x^\rho$ to an element $x\in Q$ coincide : $x^\lambda$=$x^\rho$.
  \item If $(Q,\cdot,e)$ is a left Bol loop and "$/$" is the right inverse operation to the operation $"\cdot "$ , then the operation  $x\circ y=x/y^{-1}$ is a middle Bol loop $(Q,\circ,e)$, and every one middle Bol loop can be obtained in a similar way from some left Bol loop.
\end{enumerate}
These confirm the observations of earlier authors mentioned above.

In 2010, Syrbu \cite{davidref:6} studied the connections between structure and properties of middle Bol loops and of the corresponding left Bol loops. It was noted that two middle Bol loops are isomorphic if and only if the corresponding left (right) Bol loops are isomorphic, and a general form of the autotopisms of middle Bol loops was deduced. Relations between different sets of elements, such as nucleus, left (right,middle) nuclei, the set of Moufang elements, the center, e.t.c. of a middle Bol loop and  left Bol loops were established.

In 2012, Grecu and Syrbu \cite{davidref:65} proved that two middle Bol loops are isotopic if and only if the corresponding right (left) Bol loops are isotopic. They also proved that a middle Bol loop $(Q,\circ)$ is flexible if and only if the corresponding right Bol loop $(Q,\cdot)$ satisfies the identity
$$(yx)^{-1}\cdot \big(x^{-1}\cdot y^{-1}\big)^{-1}x=x.$$

In 2012, Drapal and Shcherbacov \cite{davidref:1} rediscovered the middle Bol identities in a new way.

In 2013,  Syrbu and Grecu \cite{davidref:66n} established a necessary and sufficient condition  for the quotient loops of a  middle Bol loop and of its corresponding right Bol loop to be isomorphic.

In 2014, Grecu and Syrbu \cite{davidref:66} established:
\begin{enumerate}
  \item that the commutant (centrum) of a middle Bol loop is an AIP-subloop and
  \item a necessary and sufficient condition when the commutant is an invariant under the existing isostrophy between middle Bol loop and the corresponding right Bol loop.
\end{enumerate}

In 1994, Syrbu \cite{davidref:4}, while studying loops with universal elasticity $(xy\cdot x=x\cdot yx)$ established a necessary and sufficient condition  $(xy/z)(b\backslash xz)=x(b\backslash[(by/z)(b\backslash xz)])$ for a loop  $(Q,\cdot,\backslash,/)$ to be universally elastic. Furthermore, she constructed some finite examples of loops in which this condition and the middle Bol identity $x(yz\backslash x)=(x/z)(y\backslash x)$ are equivalent, and then posed an open problem of investigating if these two identities are also equivalent in all other finite loops.

In 2012, Drapal and Shcherbacov \cite{davidref:1} reported that Kinyon constructed a non-flexible middle Bol loop of order $16$. In 2015, we discovered new algebraic properties of middle Bol loops in Jaiy\'e\d ol\'a et.al. \cite{davidref:david1}.

\paragraph{}
Interestingly, Adeniran \cite{phd79} and
Robinson \cite{25}, Chiboka and Solarin
\cite{phd80}, Bruck \cite{7}, Bruck and Paige \cite{phd40},
Robinson \cite{phd7}, Huthnance \cite{phd44} and  Adeniran \cite{phd79} have respectively studied the holomorphs of Bol loops,
conjugacy closed loops, inverse property loops,
A-loops, extra loops, weak inverse property loops and Bruck loops. A set of results on the holomorph of some varieties of loops can be found in Jaiyeola \cite{davidref:10}. The latest studies on the holomorph of generalized Bol loops can be found in Adeniran et. al. \cite{ho1} and Jaiyeola and Popoola \cite{ho2}. Isere et. al. \cite{isere1} studied the holomorphy of Osborn loops.

\paragraph{}In this paper, our objective is to explore holomorphic structure of a middle Bol loop. Before this, we shall take some definitions and state some important results which will be often used.

For any quasigroup $(Q,\cdot)$, the group of autotopisms under componentwise composition is given by
$$ATP(Q,\cdot)=\{T=(U,V,W)\in SYM(Q)^3~|~xU\cdot yV=(x\cdot y)W~\forall ~x,y\in Q\}.$$
If $U=V=W$, then $U$ is called an automorphism of $(Q,\cdot)$ and the set of such mappings forms a group $AUT(Q,\cdot)$ called the automorphism group of $(Q,\cdot)$. We now introduce the set of anti-autotopisms given by
$$AATP(Q,\cdot)=\{T'=<U',V',W'>\in ~SYM(Q)^3~|~xU'\cdot yV'=(y\cdot x)W'~\forall ~x,y\in Q\}.$$

\begin{mydef}
Let $(G,\cdot )$ be a quasigroup. Then
\begin{enumerate}
\item a bijection $U$ is called autotopic if there exists $(U,V,W)\in ATP(G,\cdot )$; the set of all such mappings forms a group $\Sigma(G,\cdot )$.
\item a bijection $U$ is called middle regular ($\mu$-regular) if there exists a bijection $U'$ such that $(U,U'^{-1},I)\in ATP(G,\cdot )$. $U'$ is called the adjoint of $U$. The set of all $\mu$-regular mappings forms a group $\Phi(G,\cdot )\le\Sigma(G,\cdot )$. The set of all adjoint mapping
    forms a group $\Psi(G,\cdot )$.
\end{enumerate}
\end{mydef}

\begin{mydef}
Let $(L,\cdot)$ be a loop. The pair $(H, \circ )=H(L,\cdot)$ given by
$$H=A(L)\times L~\textrm{where}~A(L)\leq AUT(L, \cdot )$$
$$~\textrm{such that}~(\alpha , x)\circ (\beta , y)=(\alpha \beta , x\beta \cdot y)$$
for all $(\alpha , x), (\beta , y)\in H$ is called the $A(L)$-Holomorph of $(L, \cdot )$.
\end{mydef}
We shall need the following results.
\begin{mylem}\label{davidlem4.11a}
Let $(L,\cdot)$ be a loop. An $A(L)$-Holomorph $(H, \circ )=H(L,\cdot)$ of $(L, \cdot )$ is a loop.
\end{mylem}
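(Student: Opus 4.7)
The plan is to verify directly that $(H,\circ)$ satisfies the loop axioms: first exhibit a two-sided identity, then check the quasigroup property by solving the two defining equations uniquely in each variable. The crucial ingredient, used repeatedly, is that every $\beta\in A(L)\le AUT(L,\cdot)$ is a bijection on $L$ that fixes the identity $e$ of $L$.

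First I would propose $(I,e)\in H$ as the identity, where $I\in A(L)$ is the identity automorphism and $e$ is the neutral element of $(L,\cdot)$. A direct computation using the defining rule gives $(I,e)\circ(\beta,y)=(I\beta,e\beta\cdot y)=(\beta,e\cdot y)=(\beta,y)$ because $e\beta=e$ (as $\beta$ is an automorphism), and symmetrically $(\alpha,x)\circ(I,e)=(\alpha I,xI\cdot e)=(\alpha,x)$. So $(I,e)$ is a two-sided identity.

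Next I would verify that the right and left translations of $(H,\circ)$ are bijective. Given $(\alpha,x),(\gamma,z)\in H$, the equation $(\alpha,x)\circ(\beta,y)=(\gamma,z)$ splits into $\alpha\beta=\gamma$ in $A(L)$ and $x\beta\cdot y=z$ in $L$. Since $A(L)$ is a group, $\beta=\alpha^{-1}\gamma$ is uniquely determined; with $\beta$ fixed, $(L,\cdot)$ being a loop forces the unique solution $y=(x\beta)\backslash z$. Thus the right translation by $(\beta,y)$ is a bijection once one reads the argument in the other direction. Symmetrically, $(\alpha,x)\circ(\beta,y)=(\gamma,z)$ is solvable uniquely for $(\alpha,x)$: $\alpha=\gamma\beta^{-1}\in A(L)$ and then $x=(z/y)\beta^{-1}\in L$, using that $\beta^{-1}$ is also a bijection on $L$.

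There is no real obstacle here; the argument is almost mechanical once one keeps track of the two coordinates separately and uses that $A(L)$ is a group and $(L,\cdot)$ is a loop. The only subtlety worth emphasizing is that $e\beta=e$ holds precisely because $A(L)\le AUT(L,\cdot)$, which is what makes the proposed $(I,e)$ serve as a neutral element; this is the reason the holomorph construction is built on automorphisms rather than arbitrary bijections. I would conclude by observing that the divisions in $H$ can be written explicitly as $(\alpha,x)\backslash_H(\gamma,z)=(\alpha^{-1}\gamma,(x(\alpha^{-1}\gamma))\backslash z)$ and $(\gamma,z)/_H(\beta,y)=(\gamma\beta^{-1},((z/y)\beta^{-1}))$, completing the loop structure.
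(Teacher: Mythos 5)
Your proof is correct. The paper states this lemma without proof (it is a standard fact about holomorphs), and your direct verification --- the identity $(I,e)$ via $e\beta=e$ for $\beta\in A(L)\le AUT(L,\cdot)$, together with unique solvability in each coordinate using that $A(L)$ is a group and $(L,\cdot)$ is a loop --- is precisely the argument the paper implicitly relies on; your explicit division formulas agree with \eqref{davideq19a} and \eqref{davideq19b} as derived at the start of the proof of Lemma~\ref{davidlem16}. One cosmetic slip: solving $(\alpha,x)\circ(\beta,y)=(\gamma,z)$ uniquely for $(\beta,y)$ shows that left translation by $(\alpha,x)$ is bijective (right translation by $(\beta,y)$ is what your second, symmetric computation handles), so the labels in that sentence should be swapped, but the mathematical content is unaffected.
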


\begin{mypro}(Grecu and Syrbu \cite{davidref:65})\label{davidpro7}

Let $(Q,\circ)$ be a middle Bol loop and let $(Q,\cdot)$ be a corresponding right(left) Bol loop. Then, $ATP(Q,\cdot)=ATP(Q,\circ)$.
\end{mypro}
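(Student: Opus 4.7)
My plan is to treat the right Bol case; the left Bol case is symmetric. Let $(Q,\cdot)$ be the right Bol loop corresponding to $(Q,\circ)$, so by Theorem~\ref{davidthm1}, $x\circ y=(y\cdot xy^{-1})y$ for all $x,y\in Q$. Because right Bol loops satisfy the right inverse property, $R_{y^{-1}}=R_y^{-1}$ in $(Q,\cdot)$, and the middle Bol right translation factors through $(\cdot)$-translations as $R^{\circ}_y=R_y^{-1}L_yR_y$; a parallel analysis expresses $L^{\circ}_y$ in terms of the $(\cdot)$-translations and the inverse map $J\colon x\mapsto x^{-1}$.

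To prove $ATP(Q,\cdot)\subseteq ATP(Q,\circ)$, take $(U,V,W)\in ATP(Q,\cdot)$, which in permutation form reads $UR_{yV}=R_yW$ and $VL_{xU}=L_xW$ for all $x,y\in Q$. The target $(U,V,W)\in ATP(Q,\circ)$ is equivalent to $UR^{\circ}_{yV}=R^{\circ}_yW$ together with the analogous identity for left translations. Substituting the factorizations of $R^{\circ}_y$ and $R^{\circ}_{yV}$ turns the first identity into a conjugation relation between translation products in $(Q,\cdot)$. The principal tool for verifying it is the derived autotopism $B_y:=(R_y^{-1},R_yL_y,R_y)\in ATP(Q,\cdot)$, obtained by reading the right Bol identity $(xy\cdot z)y=x(yz\cdot y)$ directly as an autotopism relation. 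Composing $(U,V,W)$ with $B_y$ and with $B_{yV}^{-1}$ produces new autotopisms of $(Q,\cdot)$ whose component-wise comparison supplies exactly the conjugation identity sought.

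The reverse inclusion $ATP(Q,\circ)\subseteq ATP(Q,\cdot)$ is obtained by the symmetric argument, now exploiting the inverse isostrophy $x\cdot y=y/\!/x^{-1}$ recorded in the Remark to express $(\cdot)$ through $(\circ)$. I expect the main obstacle to lie in controlling the interaction of an arbitrary autotopism $(U,V,W)$ with the inverse map $J$ of $(Q,\cdot)$: in general neither $U$ nor $V$ commutes with $J$, yet the right inverse property combined with the Bol autotopisms $B_y$ forces exactly the pairing between $y$ and $y^{-1}$ (mediated by the value of $W$ at the identity) that is needed to close the chain of equalities. Once the right-translation identity is secured, the identity for left translations follows by a parallel computation, and the two autotopism groups coincide.
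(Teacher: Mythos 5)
First, note that the paper offers no proof of this Proposition at all: it is quoted verbatim from Grecu and Syrbu \cite{davidref:65}, so there is no internal argument to compare yours with; your attempt has to stand on its own, and as it stands it has a genuine gap at its centre. Writing $b=1V$ and using the paper's convention ($xU$, maps composed left to right), the statement you must verify for the inclusion $ATP(Q,\cdot)\subseteq ATP(Q,\circ)$ is $(xU)\circ(yV)=(x\circ y)W$; applying the $(\cdot)$-autotopism to the outer product in $x\circ y=(y\cdot xy^{-1})y$ and cancelling $R_{yV}$, this reduces to
$$yV\cdot\big(xU\cdot (yV)^{-1}\big)=(y\cdot xy^{-1})U,\qquad\text{i.e.}\qquad UR_{yV}^{-1}L_{yV}U^{-1}=R_y^{-1}L_y\quad\forall\,y\in Q,$$
an identity that conjugates a product of translations taken at the point $yV$ by the \emph{first} component $U$ alone. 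Componentwise composition of $(U,V,W)$ with $B_y$ and $B_{yV}^{-1}$ only ever conjugates first components by first components, second by second, third by third, so the ``component-wise comparison'' you invoke cannot by itself produce this mixed identity; to get it one needs the extra loop relations $W=UR_{1V}=VL_{1U}$ together with genuinely Bol-specific structure theory of autotopisms (Robinson/Belousov-type pseudo-automorphism decompositions, or equivalently the observation that $x\circ y=y^{-1}\backslash x$ makes $(U,V,W)\mapsto(W,JUJ,V)$ a bijection of $ATP(Q,\cdot)$ onto $ATP(Q,\circ)$, whose invariance on $ATP(Q,\cdot)$ is precisely the hard content). You yourself flag the interaction with $J$ as an expected ``obstacle'' rather than resolving it, and the reverse inclusion is likewise only asserted; so the heart of the theorem is never proved.

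Two further points. Your ``principal tool'' is misstated in the paper's notation: reading $(xy\cdot z)y=x(yz\cdot y)$ as an autotopism gives $(R_y^{-1},\,L_yR_y,\,R_y)$, whereas your middle component $R_yL_y$ (i.e.\ $z\mapsto y\cdot zy$) makes the triple an autotopism only in the presence of flexibility, which right Bol loops need not have; if you meant the other composition order you must say so, since the whole argument is a translation calculation. Also, the family $UR^{\circ}_{yV}=R^{\circ}_yW$ (for all $y$) is already equivalent to $(U,V,W)\in ATP(Q,\circ)$, so no ``analogous identity for left translations'' is needed --- which is fortunate, because $L^{\circ}_y\colon x\mapsto(x\cdot yx^{-1})x$ does not factor as a fixed composition of $(\cdot)$-translations and $J$ (the variable occurs in the translation subscripts), so the ``parallel computation'' you appeal to does not exist in the form described.
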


\section{Main Results}
\subsection{Holomorph of a Middle Bol Loop}
\begin{mylem}\label{davidlem16}
Let $(L,\cdot,/,\backslash)$ be a loop with holomorph $H(L,\cdot)$. The following are equivalent:
\begin{description}
  \item[(a)]$H(L,\cdot)$ is a middle Bol loop.
  \item[(b)]$(x\delta)\cdot(y\cdot z\delta)\backslash x=(x/z)\delta\cdot y\backslash x ~\forall ~x,y,z\in L, ~\delta\in A(L)$.
  \item[(c)]$(x\delta)\cdot(y\cdot z\delta)\backslash x=(x\delta/z\delta)\cdot y\backslash x~ \forall ~x,y,z\in L, ~\delta\in A(L)$.
  \item[(d)]$\langle\delta^{-1}\mathbb{L}_x\delta,\mathcal{R}_x,\mathcal{R}_xL_{x\delta}\rangle\in AATP(L,\cdot)~\forall ~x\in L, ~\delta\in A(L)$.
\end{description}
\end{mylem}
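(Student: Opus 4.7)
The plan is to establish the chain (a)$\Leftrightarrow$(b)$\Leftrightarrow$(c)$\Leftrightarrow$(d). The heart of the argument is (a)$\Leftrightarrow$(b), where the middle Bol identity in $H(L,\cdot)$ is unpacked into a condition on second components; the remaining equivalences are then essentially rewriting.

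For (a)$\Leftrightarrow$(b): by Lemma~\ref{davidlem4.11a}, $H(L,\cdot)$ is a loop, so its divisions $\backslash_\circ$ and $/_\circ$ exist. Direct computation from $(\alpha,x)\circ(\beta,y)=(\alpha\beta,\,x\beta\cdot y)$ gives
\begin{align*}
(\beta,y)\circ(\gamma,z)&=(\beta\gamma,\,y\gamma\cdot z),\\
(\beta\gamma,\,y\gamma\cdot z)\backslash_\circ(\alpha,x)&=\bigl(\eta,\;(y\gamma\cdot z)\eta\backslash x\bigr),\quad\eta:=(\beta\gamma)^{-1}\alpha,\\
(\alpha,x)/_\circ(\gamma,z)&=\bigl(\alpha\gamma^{-1},\,(x/z)\gamma^{-1}\bigr),\\
(\beta,y)\backslash_\circ(\alpha,x)&=\bigl(\beta^{-1}\alpha,\,(y\beta^{-1}\alpha)\backslash x\bigr).
\end{align*}
Plugging these into $P\circ\bigl((Q\circ R)\backslash_\circ P\bigr)=(P/_\circ R)\circ(Q\backslash_\circ P)$ with $P=(\alpha,x)$, $Q=(\beta,y)$, $R=(\gamma,z)$, the first components agree automatically because $A(L)$ is a group. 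Using that $\eta\in A(L)$ is an automorphism of $(L,\cdot)$, the second components reduce to
$$x\eta\cdot(y\gamma\eta\cdot z\eta)\backslash x=(x/z)\eta\cdot(y\gamma\eta)\backslash x.$$
Since $y\mapsto y\gamma\eta$ is a bijection of $L$ for fixed $\gamma,\eta$, the substitution $y''=y\gamma\eta$ absorbs $\gamma$ and leaves exactly (b) with $\delta=\eta$. Conversely, (b) for all $\delta\in A(L)$ makes the above identity hold for all $\alpha,\beta,\gamma$, yielding (a).

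For (b)$\Leftrightarrow$(c), any $\delta\in AUT(L,\cdot)$ preserves the right division, so $(x/z)\delta=x\delta/z\delta$, and (b) is literally (c). For (c)$\Leftrightarrow$(d), I would unfold the composite maps in (d) using $w\mathbb{L}_x=x/w$, $w\mathcal{R}_x=w\backslash x$, and the automorphism property of $\delta$, obtaining
$$a(\delta^{-1}\mathbb{L}_x\delta)=x\delta/a,\qquad b\mathcal{R}_x=b\backslash x,\qquad (ba)\mathcal{R}_xL_{x\delta}=x\delta\cdot(ba\backslash x).$$
Hence the triple in (d) lies in $AATP(L,\cdot)$ iff $(x\delta/a)\cdot(b\backslash x)=x\delta\cdot(ba\backslash x)$ for all $a,b,x\in L$ and $\delta\in A(L)$, which, after the bijective substitution $a=z\delta$, is precisely (c).

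The real work lies in (a)$\Leftrightarrow$(b): three independent automorphism parameters $\alpha,\beta,\gamma$ arising from $H$ must collapse into the single $\delta$ of (b). This collapse relies on both the group structure of $A(L)$ (for matching first components) and the automorphism property (so that $\eta$ distributes across $y\gamma\cdot z$ and $\gamma$ is absorbable by the substitution $y''=y\gamma\eta$). The remaining equivalences are routine rewritings once the identity has been isolated on $L$.
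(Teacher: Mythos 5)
Your proposal is correct and follows essentially the same route as the paper: compute the two divisions in $H(L,\cdot)$, expand the middle Bol identity componentwise, collapse the three automorphism parameters into the single $\delta=\gamma^{-1}\beta^{-1}\alpha$ via the absorbing substitution on $y$, use the automorphism property of $\delta$ for (b)$\Leftrightarrow$(c), and translate (c) into the anti-autotopism form (d) with the bijective substitution on the $z$-variable. The only cosmetic difference is that you unfold (d) pointwise in elements rather than manipulating translation maps as the paper does, which is the same computation in different notation.
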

\begin{proof}
We desire a necessary and sufficient condition for
$$x(yz\backslash x)=(x/z)(y\backslash x)$$
$$\textrm{Now},~(\alpha, x)\circ(\beta, y)=(\gamma, z)\Rightarrow (\gamma, z)/(\beta, y)=(\alpha, x),$$
$$\textrm{so},~(\alpha\beta, x\beta\cdot y)=(\gamma, z)$$
$$\Rightarrow \alpha = \gamma\beta^{-1}, x= (z/y)\beta^{-1}$$
\begin{equation}\label{davideq19a}
\therefore (\gamma,z)/(\beta, y)= \Big(\gamma\beta^{-1},(z/y)\beta^{-1}\Big)
\end{equation}
$$\textrm{Also},~(\alpha,x)\circ(\beta,y)=(\gamma, z)\Rightarrow (\beta,y)=(\alpha,x)\backslash (\gamma,z).$$
$$\textrm{Thus},~(\alpha\beta, x\beta\cdot y)=(\gamma, z)$$
$$\Rightarrow \beta=\alpha^{-1}\gamma, y=(x\alpha^{-1}\gamma)\backslash z$$
\begin{equation}\label{davideq19b}
\therefore (\beta,y)=\Big(\alpha^{-1}\gamma,(x\alpha^{-1}\gamma)\backslash z\Big)=(\alpha,x)\backslash(\gamma,z)
\end{equation}
We want to find a necessary and sufficient condition for $$(\alpha,x)\circ[(\beta,y)\circ(\gamma,z)\backslash (\alpha,x)]=[(\alpha,x)/(\gamma,x)]\circ[(\beta,y)\backslash (\alpha,x)]$$
$$LHS=(\alpha,x)\circ[(\beta,y)\circ(\gamma,z)\backslash (\alpha,x)]$$
$$=(\alpha,x)\circ\Big((\beta\gamma)^{-1}\alpha, (y\gamma\cdot z)\gamma^{-1}\beta^{-1}\alpha\backslash x\Big)$$
$$=(\alpha,x)\circ\Big((\beta\gamma)^{-1}\alpha, y\beta^{-1}\alpha\cdot z\gamma^{-1}\beta^{-1}\alpha\backslash x\Big).$$
$$\therefore LHS=\Big(\alpha\gamma^{-1}\beta^{-1}\alpha, (x\gamma^{-1}\beta^{-1}\alpha)\cdot(y\beta^{-1}\alpha\cdot z\gamma^{-1}\beta^{-1}\alpha)\backslash x\Big).$$
$$RHS=[(\alpha,x)/(\gamma,x)]\circ[(\beta,y)\backslash (\alpha,x)]$$
$$=\Big(\alpha\gamma^{-1}, (x/z)\gamma^{-1}\Big)\circ\Big(\beta^{-1}\alpha, (y\beta^{-1}\alpha)\backslash x\Big).$$
$$\therefore RHS=\Big(\alpha\gamma^{-1}\beta^{-1}\alpha,(x/z)\gamma^{-1}\beta^{-1}\alpha\cdot(y\beta^{-1}\alpha)\backslash x\Big).$$
$$LHS=RHS\Leftrightarrow$$
$$(x\gamma^{-1}\beta^{-1}\alpha)\cdot(y\beta^{-1}\alpha\cdot z\gamma^{-1}\beta^{-1}\alpha)\backslash x=(x/z)\gamma^{-1}\beta^{-1}\alpha\cdot(y\beta^{-1}\alpha)\backslash x.$$
Let $\delta=\gamma^{-1}\beta^{-1}\alpha$, then
$$(x\delta)\cdot(y\gamma\delta\cdot z\delta)\backslash x=(x/z)\delta\cdot(y\gamma\delta)\backslash x$$
Replacing $y$ by $y(\gamma\delta)^{-1}$, we get
$$(x\delta)\cdot(y(\gamma\delta)^{-1}\gamma\delta\cdot z\delta)\backslash x=(x/z)\delta\cdot(y(\gamma\delta)^{-1}\gamma\delta)\backslash x$$
\begin{equation}\label{davideq20a}
\iff(x\delta)\cdot(y\cdot z\delta)\backslash x=(x/z)\delta\cdot y\backslash x=(x\delta/z\delta)\cdot y\backslash x
\end{equation}
 $$\textrm{Note that}~y\delta=(y/z)\delta\cdot z\delta \Rightarrow y\delta/z\delta=(y/z)\delta.$$
By writing equation \eqref{davideq20a} in translation form, we have:
$$(x\delta)\cdot(y\cdot z\delta)\mathcal{R}_x=(z\mathbb{L}_x)\delta\cdot y\mathcal{R}_x$$
$$\iff(y\cdot z\delta)\mathcal{R}_xL_{x\delta}=z\mathbb{L}_x\delta\cdot y\mathcal{R}_x.$$
Replacing $z$ by $z\delta^{-1}$, we have;
$$(y\cdot z\delta^{-1}\delta)\mathcal{R}_xL_{x\delta}=z\delta^{-1}\mathbb{L}_x\delta\cdot y\mathcal{R}_x$$
$$\iff(y\cdot z)\mathcal{R}_xL_{x\delta}=z\delta^{-1}\mathbb{L}_x\delta\cdot y\mathcal{R}_x$$
$$\iff \langle\delta^{-1}\mathbb{L}_x\delta,\mathcal{R}_x,\mathcal{R}_xL_{x\delta}\rangle\in AATP(L,\cdot).$$
\end{proof}
\begin{mylem}\label{davidlem17}
Let $(L,\cdot,/,\backslash)$ be a loop with holomorph $H(L,\cdot)$. Then $H(L,\cdot)$ is a commutative loop if and only if $A(L,\cdot)$ is an abelian group and $\langle\beta,\alpha^{-1},I\rangle\in AATP(L,\cdot)~ \forall ~\alpha,\beta\in A(L)$.
\end{mylem}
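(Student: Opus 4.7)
The plan is to unwind commutativity of $H(L,\cdot)$ directly from the holomorph product formula $(\alpha,x)\circ(\beta,y) = (\alpha\beta,\, x\beta\cdot y)$ and match the resulting scalar identities to the two claimed conditions. The mechanics are simpler than in the preceding lemma, because only two products need to be equated rather than a four-fold nested expression.

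First I would write the commutativity requirement $(\alpha,x)\circ(\beta,y) = (\beta,y)\circ(\alpha,x)$ and expand both sides using the holomorph multiplication, obtaining
$$ (\alpha\beta,\; x\beta\cdot y) \;=\; (\beta\alpha,\; y\alpha\cdot x) $$
for all $\alpha,\beta\in A(L)$ and $x,y\in L$. Because $H = A(L)\times L$ is a Cartesian product, this identity splits into two independent conditions. Equality of the first coordinates reads $\alpha\beta=\beta\alpha$ for every $\alpha,\beta\in A(L)$, which is exactly the statement that $A(L)$ is an abelian group. Equality of the second coordinates reads $x\beta\cdot y = y\alpha\cdot x$ for all $x,y\in L$ and all $\alpha,\beta\in A(L)$.

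Next I would convert this second identity into the claimed anti-autotopism. Since $\alpha$ is a bijection of $L$, the substitution $y\mapsto y\alpha^{-1}$ is legitimate and yields the equivalent statement $x\beta\cdot y\alpha^{-1} = y\cdot x$ for all $x,y\in L$. Comparing with the defining relation $xU'\cdot yV' = (y\cdot x)W'$ for $AATP(L,\cdot)$, one reads off $U'=\beta$, $V'=\alpha^{-1}$, $W'=I$, which is precisely $\langle\beta,\alpha^{-1},I\rangle\in AATP(L,\cdot)$. Reversing the substitution recovers the converse direction, completing the equivalence.

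There is essentially no technical obstacle; the argument is bookkeeping. The only point requiring care is ensuring that the substitution $y\mapsto y\alpha^{-1}$ matches the exact form of the anti-autotopism definition, with the permutations slotted correctly into the $U',V',W'$ positions. As a sanity check, the special case $\alpha=\beta=I$ in the second condition collapses to $x\cdot y = y\cdot x$, which correctly recovers the necessary commutativity of $(L,\cdot)$ that is forced by the commutativity of its holomorph.
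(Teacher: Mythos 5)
Your proof is correct and follows essentially the same route as the paper's: expand $(\alpha,x)\circ(\beta,y)=(\beta,y)\circ(\alpha,x)$, split into the coordinate conditions $\alpha\beta=\beta\alpha$ and $x\beta\cdot y=y\alpha\cdot x$, and rewrite the latter via $y\mapsto y\alpha^{-1}$ as $x\beta\cdot y\alpha^{-1}=yx$, i.e.\ $\langle\beta,\alpha^{-1},I\rangle\in AATP(L,\cdot)$. No gaps; the paper does exactly this computation.
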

\begin{proof}
$$\textrm{Let}~(\alpha,x)\circ(\beta,y)=(\alpha\beta,x\beta\cdot y)=LHS~\textrm{and}$$
$$(\beta,y)\circ(\alpha,x)=(\beta\alpha,y\alpha\cdot x)=RHS.$$
$$\textrm{Then},~LHS=RHS \Leftrightarrow$$
$$(\alpha\beta,x\beta\cdot y)=(\beta\alpha,y\alpha\cdot x)\Leftrightarrow$$
$$\alpha\beta=\beta\alpha,x\beta\cdot y\alpha^{-1}=yx\Leftrightarrow$$
$$\alpha\beta=\beta\alpha~\textrm{and}~\langle\beta,\alpha^{-1},I\rangle\in AATP(L,\cdot).$$
\end{proof}
\begin{mycor}\label{davidcor23a}
Let $(L,\cdot,/,\backslash)$ be a commutative loop with holomorph $H(L,\cdot)$. Then $(H,\circ )=H(L,\cdot)$ is a commutative loop if and only if $A(L,\cdot)$ is an abelian group and $(\beta,\alpha^{-1},I)\in ATP(L,\cdot)~\forall~\alpha,\beta\in A(L)$.
\end{mycor}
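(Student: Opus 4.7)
The plan is to derive this corollary directly from Lemma~\ref{davidlem17}, using only the additional hypothesis that $(L,\cdot)$ itself is commutative. Lemma~\ref{davidlem17} already characterizes commutativity of $H(L,\cdot)$ by the conjunction of two conditions: $A(L,\cdot)$ being abelian, and $\langle\beta,\alpha^{-1},I\rangle\in AATP(L,\cdot)$ for all $\alpha,\beta\in A(L)$. Since the first condition is unchanged in the corollary, everything reduces to showing that, under commutativity of $(L,\cdot)$, the anti-autotopism condition $\langle\beta,\alpha^{-1},I\rangle\in AATP(L,\cdot)$ is equivalent to the autotopism condition $(\beta,\alpha^{-1},I)\in ATP(L,\cdot)$.

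First I would unwind the two definitions side by side: by definition, $\langle\beta,\alpha^{-1},I\rangle\in AATP(L,\cdot)$ means $x\beta\cdot y\alpha^{-1}=(y\cdot x)I=yx$ for all $x,y\in L$, while $(\beta,\alpha^{-1},I)\in ATP(L,\cdot)$ means $x\beta\cdot y\alpha^{-1}=xy$ for all $x,y\in L$. If $(L,\cdot)$ is commutative then $xy=yx$, so these two identities coincide, hence $\langle\beta,\alpha^{-1},I\rangle\in AATP(L,\cdot)\iff(\beta,\alpha^{-1},I)\in ATP(L,\cdot)$. More generally, for a commutative loop the sets $ATP(L,\cdot)$ and $AATP(L,\cdot)$ coincide as subsets of $SYM(L)^3$, so the substitution in Lemma~\ref{davidlem17} is immediate.

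Combining these, I would conclude: $(H,\circ)$ is commutative $\iff$ $A(L,\cdot)$ is abelian and $\langle\beta,\alpha^{-1},I\rangle\in AATP(L,\cdot)$ for all $\alpha,\beta\in A(L)$ (by Lemma~\ref{davidlem17}) $\iff$ $A(L,\cdot)$ is abelian and $(\beta,\alpha^{-1},I)\in ATP(L,\cdot)$ for all $\alpha,\beta\in A(L)$ (by the equivalence just established). There is no real obstacle here; the corollary is essentially a translation of the lemma through the identity $xy=yx$, and its content is simply that commutativity of $L$ collapses the anti-autotopism requirement to an autotopism requirement.
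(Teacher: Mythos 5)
Your proposal is correct and follows the same route as the paper, whose proof is simply ``Use Lemma~\ref{davidlem17}''; you merely spell out the implicit step that commutativity of $(L,\cdot)$ turns the anti-autotopism condition $\langle\beta,\alpha^{-1},I\rangle\in AATP(L,\cdot)$ into the autotopism condition $(\beta,\alpha^{-1},I)\in ATP(L,\cdot)$.
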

\begin{proof}
Use Lemma~\ref{davidlem17}.
\end{proof}
\begin{mycor}\label{davidcor23b}
Let $(L,\cdot,/,\backslash)$ be a commutative loop with holomorph $H(L,\cdot)$. Then $(H,\circ )=H(L,\cdot)$ is a commutative loop if and only if $A(L,\cdot)$ is an abelian group and $\beta\in\Phi(L,\cdot),~\beta'=\alpha\in\Psi(L,\cdot)$ for each $(\alpha,\beta)\in A(L)^2$.
\end{mycor}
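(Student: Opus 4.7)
The plan is to derive this corollary directly from Corollary~\ref{davidcor23a} by unpacking the definition of $\mu$-regular and adjoint mappings. Since both statements have the same hypothesis (commutative loop, holomorph commutative) and the same first conclusion ($A(L,\cdot)$ abelian), the only thing to verify is that the autotopism condition of Corollary~\ref{davidcor23a} translates exactly into the $\mu$-regularity condition stated here.

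Concretely, I would first recall that by Definition~1.2(2), a bijection $U$ on $L$ lies in $\Phi(L,\cdot)$ with adjoint $U' \in \Psi(L,\cdot)$ precisely when $(U, U'^{-1}, I) \in ATP(L,\cdot)$. I would then apply this definition with the substitution $U := \beta$ and $U' := \alpha$, so that $U'^{-1} = \alpha^{-1}$. Under this identification, the triple $(\beta, \alpha^{-1}, I) \in ATP(L,\cdot)$ is exactly the statement $\beta \in \Phi(L,\cdot)$ with $\beta' = \alpha \in \Psi(L,\cdot)$.

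Hence the condition ``$(\beta,\alpha^{-1},I)\in ATP(L,\cdot)$ for all $\alpha,\beta\in A(L)$'' in Corollary~\ref{davidcor23a} is equivalent to ``for every $(\alpha,\beta)\in A(L)^2$, $\beta\in\Phi(L,\cdot)$ and $\beta'=\alpha\in\Psi(L,\cdot)$.'' Combined with the $A(L,\cdot)$-abelian condition (shared verbatim), Corollary~\ref{davidcor23a} gives the desired biconditional, and the proof concludes with a single invocation of Corollary~\ref{davidcor23a}.

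There is no real obstacle here; the statement is a pure reformulation. The only point requiring a bit of care is keeping the indices straight when matching $(U,U'^{-1},I)$ to $(\beta,\alpha^{-1},I)$, namely recognizing that it is $\beta$ (not $\alpha$) that is $\mu$-regular, with $\alpha$ playing the role of its adjoint.
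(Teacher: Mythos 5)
Your proposal is correct and follows exactly the paper's route: the paper's proof is the one-line instruction ``Use Corollary~\ref{davidcor23a}'', and you have simply spelled out the definitional matching of $(\beta,\alpha^{-1},I)\in ATP(L,\cdot)$ with $\beta\in\Phi(L,\cdot)$, $\beta'=\alpha\in\Psi(L,\cdot)$, which is the intended content. Your care in identifying $\beta$ as the $\mu$-regular map and $\alpha$ as its adjoint is exactly the right (and only) point to check.
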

\begin{proof}
Use Corollary~\ref{davidcor23a}.
\end{proof}

\begin{mylem}\label{davidlem17a}
Let $(L,\cdot,/,\backslash)$ be a commutative middle Bol loop with an holomorph $(H,\circ )=H(L,\cdot)$. If:
\begin{enumerate}
  \item $\delta=\delta(x,z)=R_{(z\backslash x)}R^{-1}_z$ for each $\delta\in A(L)$ and for arbitrarily fixed $x,z\in L$; and
  \item $\mathcal{R}^{-1}_wR_{y\delta}=R_yR^{-1}_wR_{w\delta}~\forall~ y,w\in L$ and $\delta\in A(L)$,
\end{enumerate}
 then $H(L,\cdot)$ is a middle Bol loop.
\end{mylem}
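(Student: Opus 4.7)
The plan is to invoke Lemma~\ref{davidlem16}: $H(L,\cdot)$ will be a middle Bol loop as soon as one of the equivalent conditions (a)--(d) there holds. I would verify part~(b):
\[
(x\delta)\cdot(y\cdot z\delta)\backslash x=(x/z)\delta\cdot y\backslash x\qquad\forall\,x,y,z\in L,\ \delta\in A(L).
\]
Hypothesis~1 tells us that each $\delta\in A(L)$ has the explicit factorisation $\delta=R_{z\backslash x}R_z^{-1}$, so it suffices to verify (b) for $\delta$ of this form with $x$ and $z$ the same as in the identity.

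Writing $u=z\backslash x$, a direct calculation using the paper's left-to-right operator convention gives $a\delta=(au)/z$ for every $a\in L$. In particular $z\delta=(zu)/z=x/z$, $x\delta=(xu)/z$, and, by commutativity of $L$ (so $x/z=z\backslash x=u$), $(x/z)\delta=u^2/z$. Substituting these into (b) reduces it to a relation purely in $(L,\cdot,/,\backslash)$ involving $x$, $y$, $z$, $u$ and $y\backslash x$. I would next apply the middle Bol identity in $L$, namely $x(vw\backslash x)=(x/w)(v\backslash x)$, with $v=y$ and $w=z\delta=x/z$, in order to rewrite $(y\cdot z\delta)\backslash x=y(x/z)\backslash x$ as $x\backslash\bigl[(x/(x/z))(y\backslash x)\bigr]$. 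Commutativity then collapses $x/(x/z)$ to $z$ (since $uz=x=zu$), leaving a relation in which the only remaining discrepancy between the two sides is the passage of the automorphism factor past a term containing $y\backslash x$.

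That discrepancy is exactly what hypothesis~2 is designed to absorb. In pointwise form the hypothesis reads $(w/a)(y\delta)=((ay)/w)(w\delta)$ for every $a,y,w\in L$. I would specialise its free variables to the residual terms appearing after the middle Bol reduction above; such a specialisation transfers $(xu)/z$ multiplied by the $x$-divided expression on the LHS into the target $u^2/z\cdot(y\backslash x)$ on the RHS, completing the verification of (b) and therefore, via Lemma~\ref{davidlem16}, the conclusion that $H(L,\cdot)$ is middle Bol.

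The main obstacle will be the combinatorial bookkeeping. Condition~1 pins down the action of $\delta$ on the distinguished elements $x$ and $z$; the middle Bol identity handles the $y$-dependence on the LHS; and condition~2 supplies the precise identity needed to reconcile the two. Because $(L,\cdot)$ is not assumed to be associative or to possess any inverse property, every rearrangement of factors must be justified by one of commutativity, middle Bol in $L$, condition~1, or condition~2---there are no free moves---and one must keep track throughout of the coupling between $\delta$, $x$, and $z$ imposed by condition~1.
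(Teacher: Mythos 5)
Your reduction to Lemma~\ref{davidlem16} is sound, and your preliminary steps are correct: with $u=z\backslash x$ and $a\delta=(au)/z$ one gets $z\delta=x/z=u$, $x\delta=(xu)/z$, $(x/z)\delta=u^2/z$, and the middle Bol identity of $L$ together with commutativity turns condition (b) into
\begin{equation*}
\bigl((xu)/z\bigr)\cdot\bigl(x\backslash[z(y\backslash x)]\bigr)=(u^2/z)\cdot(y\backslash x).
\end{equation*}
The gap is in your final step, where you assert that one suitable specialisation of the pointwise form $(w/a)(c\delta)=((ac)/w)(w\delta)$ of hypothesis~2 carries one side into the other. It does not. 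The matching that identifies the $(w\delta)$-factor with $(xu)/z=x\delta$ (i.e.\ $w=x$, $a=z$, $c=y\backslash x$) only reduces the goal to the residual identity $u\cdot\bigl((y\backslash x)\delta\bigr)=(u\delta)\cdot(y\backslash x)$, equivalently $\delta R_u=R_{u\delta}$, which is not an instance of hypothesis~2; and the other natural matchings (taking $w=u$, or $c=x$) require identities such as $\bigl((y\backslash x)\backslash x\bigr)\cdot(z\backslash x)=z\cdot(y\backslash x)$, which already fail in abelian groups, hence are not consequences of commutativity and the middle Bol law. So your closing claim that hypothesis~1 is needed only to pin down $\delta$ on $x$ and $z$ while hypothesis~2 ``supplies the precise identity needed to reconcile the two'' is where the argument breaks: to discharge the residual identity you must invoke hypothesis~1 a second time with a different parameter pair (for instance $z'=u$, $x'=u\cdot u\delta$, which gives $\delta=R_{u\delta}R_u^{-1}$), and this is legitimate only because that hypothesis is quantified over arbitrarily fixed pairs. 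With that extra appeal your element-wise route does close; as written, the step would fail.

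For comparison, the paper avoids the bookkeeping entirely by working with condition (d) of Lemma~\ref{davidlem16}: it factors the required anti-autotopism as
\begin{equation*}
\langle\delta^{-1}\mathbb{L}_x\delta,\mathcal{R}_x,\mathcal{R}_xL_{x\delta}\rangle=(\delta^{-1},\mathcal{R}_x,I)\circ\langle\mathbb{L}_x,\mathcal{R}_x,\mathcal{R}_xL_x\rangle\circ(\delta,\mathcal{R}^{-1}_x,L^{-1}_xL_{x\delta}),
\end{equation*}
and observes that hypothesis~1 is exactly the condition that the first factor is an autotopism, hypothesis~2 (via commutativity, $L_a=R_a$) exactly that the third is, while the middle factor is the anti-autotopism form of the middle Bol identity of $(L,\cdot)$; commutativity then makes the mixed product an anti-autotopism for every $x$ and $\delta$. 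If you rewrite your computation so that each of your identity manipulations corresponds to one of these three factors, the residual step you are missing is precisely the first factor $(\delta^{-1},\mathcal{R}_x,I)$ used at the second point of the chain.
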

\begin{proof}
Observe that
$$\langle\delta^{-1}\mathbb{L}_x\delta,\mathcal{R}_x,\mathcal{R}_xL_{x\delta}\rangle=
(\delta^{-1},\mathcal{R}_x,I)\circ\langle\mathbb{L}_x,\mathcal{R}_x,\mathcal{R}_xL_x\rangle\circ(\delta,\mathcal{R}^{-1}_x,L^{-1}_xL_{x\delta}).$$
$$\textrm{On one hand},~(\delta^{-1},\mathcal{R}_x,I)\in ATP(L,\cdot)\Leftrightarrow y\delta^{-1}\cdot z\mathcal{R}_x=yz$$
$$\Leftrightarrow y\delta^{-1}(z\backslash x)=yR_z\Leftrightarrow \delta^{-1}R_{(z\backslash x)}=R_z\Leftrightarrow \delta =\delta(x,z)=R_{(z\backslash x)}R^{-1}_z.$$
$$\textrm{On another hand},~(\delta,\mathcal{R}^{-1}_w,L^{-1}_wL_{w\delta})\in ATP(L,\cdot)\Leftrightarrow$$
$$y\delta\cdot v\mathcal{R}^{-1}_w=(yv)L^{-1}_wL_{w\delta}\Leftrightarrow v\mathcal{R}^{-1}_wL_{y\delta}=vL_yL^{-1}_wL_{w\delta}$$
$$\Leftrightarrow \mathcal{R}^{-1}_wL_{y\delta}=L_yL^{-1}_wL_{w\delta}\Leftrightarrow \mathcal{R}^{-1}_wR_{y\delta}=R_yR^{-1}_wR_{x\delta}.$$
\end{proof}
\begin{mylem}\label{davidthm5}
Let $(L,\cdot,/,\backslash)$ be a commutative loop such that $\mathcal{R}^{-1}_wR_{y\delta}=R_yR^{-1}_wR_{w\delta},~\forall~ w,y\in L$ and $\delta\in A(L)$. $(H,\circ )=H(L,\cdot)$ is a middle Bol loop if and only if $(L,\cdot)$ is a middle Bol loop and $\delta =\delta(x,z)=R_{(z\backslash x)}R^{-1}_z$ for each $\delta\in A(L)$ and for arbitrarily fixed $x,z\in L$.
\end{mylem}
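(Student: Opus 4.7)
The plan is to read both directions of the iff off the anti-autotopism characterisation in Lemma~\ref{davidlem16}(d), combined with the triple decomposition
$$\langle\delta^{-1}\mathbb{L}_x\delta,\mathcal{R}_x,\mathcal{R}_xL_{x\delta}\rangle = (\delta^{-1},\mathcal{R}_x,I)\circ\langle\mathbb{L}_x,\mathcal{R}_x,\mathcal{R}_xL_x\rangle\circ(\delta,\mathcal{R}^{-1}_x,L^{-1}_xL_{x\delta})$$
already verified inside the proof of Lemma~\ref{davidlem17a}. The ``$\Leftarrow$'' direction is then a direct invocation: the standing commutativity and the translation-identity hypothesis of the theorem, together with the two added assumptions that $(L,\cdot)$ is middle Bol and $\delta=R_{(z\backslash x)}R_z^{-1}$, are exactly the hypotheses of Lemma~\ref{davidlem17a}, which yields that $H(L,\cdot)$ is middle Bol.

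For the ``$\Rightarrow$'' direction I would first specialise Lemma~\ref{davidlem16}(d) to the identity automorphism $\delta=I\in A(L)$. The condition collapses to $\langle\mathbb{L}_x,\mathcal{R}_x,\mathcal{R}_xL_x\rangle\in AATP(L,\cdot)$ for every $x\in L$, which, by reading back the chain of equivalences (in particular equation \eqref{davideq20a} at $\delta=I$) inside the proof of Lemma~\ref{davidlem16}, is precisely the middle Bol identity for $(L,\cdot)$. Moreover, since $L$ is commutative, the defining equations for $AATP(L,\cdot)$ and $ATP(L,\cdot)$ coincide, so in particular $\langle\mathbb{L}_x,\mathcal{R}_x,\mathcal{R}_xL_x\rangle\in ATP(L,\cdot)$ as well.

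Next I would extract the formula for $\delta$. The hypothesis of the theorem is, by the ``On another hand'' calculation inside the proof of Lemma~\ref{davidlem17a} applied with $w=x$, exactly the statement that $(\delta,\mathcal{R}^{-1}_x,L^{-1}_xL_{x\delta})\in ATP(L,\cdot)$ for all $x\in L$ and $\delta\in A(L)$. Since the left-hand side of the displayed decomposition lies in $AATP(L,\cdot)=ATP(L,\cdot)$ (by Lemma~\ref{davidlem16}(d) and commutativity) and the middle factor lies in $ATP(L,\cdot)$ (by the previous paragraph), one can solve for the first factor:
$$(\delta^{-1},\mathcal{R}_x,I)=\langle\delta^{-1}\mathbb{L}_x\delta,\mathcal{R}_x,\mathcal{R}_xL_{x\delta}\rangle\circ(\delta,\mathcal{R}^{-1}_x,L^{-1}_xL_{x\delta})^{-1}\circ\langle\mathbb{L}_x,\mathcal{R}_x,\mathcal{R}_xL_x\rangle^{-1}\in ATP(L,\cdot).$$
Finally, the ``On one hand'' calculation inside the proof of Lemma~\ref{davidlem17a} translates membership $(\delta^{-1},\mathcal{R}_x,I)\in ATP(L,\cdot)$ directly into $\delta=R_{(z\backslash x)}R_z^{-1}$, completing the forward direction.

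The main obstacle will be the careful bookkeeping for composition of autotopism and anti-autotopism triples: one needs that $ATP\cdot AATP\subseteq AATP$ and $AATP\cdot AATP\subseteq ATP$, together with the observation that commutativity of $L$ identifies $AATP(L,\cdot)$ with $ATP(L,\cdot)$ (and also $L_a$ with $R_a$), so that the decomposition can be manipulated inside a single group rather than shuttled between two sets. A secondary care-point is confirming that $I\in A(L)$ (valid since $A(L)\le AUT(L,\cdot)$), so that the $\delta=I$ specialisation used to extract the middle Bol identity on $L$ is legitimate.
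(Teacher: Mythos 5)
Your proposal is correct and takes essentially the same route as the paper, whose own proof is only the remark that it is ``similar to Lemma~\ref{davidlem17a}'': both arguments rest on the decomposition of the triple in Lemma~\ref{davidlem16}(d) into the factors $(\delta^{-1},\mathcal{R}_x,I)$, $\langle\mathbb{L}_x,\mathcal{R}_x,\mathcal{R}_xL_x\rangle$ and $(\delta,\mathcal{R}^{-1}_x,L^{-1}_xL_{x\delta})$, together with the two autotopism characterisations computed there. Your explicit handling of the converse (specialising to $\delta=I$ to recover the middle Bol identity on $L$, and solving for the first factor inside $ATP(L,\cdot)$, using that commutativity makes $AATP(L,\cdot)=ATP(L,\cdot)$) merely fills in bookkeeping the paper leaves implicit.
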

\begin{proof}
This is similar to Lemma~\ref{davidlem17a}.
\end{proof}

\begin{myth}\label{davidthm5a}
Let $(L,\cdot,/,\backslash)$ be a commutative loop such that $\mathcal{R}^{-1}_wR_{y\delta}=R_yR^{-1}_wR_{w\delta},~\forall~ w,y\in L$ and $\delta\in A(L)$. $(H,\circ )=H(L,\cdot)$ is a commutative middle Bol loop if and only if
\begin{enumerate}
  \item $(L,\cdot)$ is a middle Bol loop;
  \item $A(L)$ is an abelian group;
  \item $\delta=\delta(x,z)=R_{(z\backslash x)}R^{-1}_z$ for each $\delta\in A(L)$ and for arbitrarily fixed $x,z\in L$; and
  \item $\beta\in\Phi(L,\cdot),~\beta'=\alpha\in\Psi(L,\cdot)$ for each $(\alpha,\beta)\in A(L)^2$.
\end{enumerate}
\end{myth}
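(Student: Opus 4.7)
The plan is to prove the theorem by recognizing that ``commutative middle Bol'' factors cleanly as ``commutative'' \emph{and} ``middle Bol'', and that under the standing hypothesis of the theorem, each of these two properties of $(H,\circ)$ has already been characterized in the preceding results. So I would not do any fresh autotopism calculation; instead I would decompose the biconditional and cite.

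First I would observe that $(H,\circ)=H(L,\cdot)$ is a loop by Lemma~\ref{davidlem4.11a}, so it makes sense to ask whether it is both commutative and middle Bol. I would then split the conclusion of the theorem by pairing condition~(1) and condition~(3) with the middle Bol property, and pairing condition~(2) and condition~(4) with commutativity. For the forward direction I would argue that if $(H,\circ)$ is a commutative middle Bol loop, then in particular it is a middle Bol loop, so by Lemma~\ref{davidthm5} (whose hypotheses on $(L,\cdot)$ being commutative and satisfying $\mathcal{R}^{-1}_wR_{y\delta}=R_yR^{-1}_wR_{w\delta}$ are exactly the standing hypotheses here) we immediately get (1) and (3); and since $(H,\circ)$ is also commutative and $(L,\cdot)$ is commutative, Corollary~\ref{davidcor23b} yields (2) and (4). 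For the converse, (1) and (3) together with the standing hypothesis feed directly into the ``if'' direction of Lemma~\ref{davidthm5} to force $(H,\circ)$ to be middle Bol, while (2) and (4) feed into the ``if'' direction of Corollary~\ref{davidcor23b} to force $(H,\circ)$ to be commutative.

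I expect no genuine obstacle here; the content of the theorem is essentially the conjunction of Lemma~\ref{davidthm5} and Corollary~\ref{davidcor23b}, and the only thing to check is that the hypotheses required for each of these two prior results are present, which is immediate from the statement. The mildly delicate point — if any — is being clear that the assumption ``$(L,\cdot)$ commutative'' in Corollary~\ref{davidcor23b} is satisfied by the commutative loop of the theorem (obvious), and that the relational assumption $\mathcal{R}^{-1}_wR_{y\delta}=R_yR^{-1}_wR_{w\delta}$ is only needed on the middle Bol side (via Lemma~\ref{davidthm5}) and plays no role in the commutativity reduction. Thus the written proof can be very short: invoke Lemma~\ref{davidthm5} and Corollary~\ref{davidcor23b} and conclude.
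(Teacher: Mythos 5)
Your proposal is correct and is essentially the paper's own proof: the paper disposes of this theorem with the one-line instruction ``Use Lemma~\ref{davidthm5} and Corollary~\ref{davidcor23b}'', which is exactly your decomposition of ``commutative middle Bol'' into the middle Bol part (conditions (1) and (3), via Lemma~\ref{davidthm5}) and the commutativity part (conditions (2) and (4), via Corollary~\ref{davidcor23b}). Your write-up merely makes explicit the hypothesis-checking that the paper leaves implicit.
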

\begin{proof}
Use Lemma~\ref{davidthm5} and Corollary~\ref{davidcor23b}.
\end{proof}

\begin{mycor}\label{davidthm5a.1}
Let $(L,\cdot,/,\backslash)$ be a commutative loop such that $\mathcal{R}^{-1}_wR_{y\delta}=R_yR^{-1}_wR_{w\delta},~\forall~ w,y\in L$ and $\delta\in A(L)$. $(H,\circ )=H(L,\cdot)$ is a commutative middle Bol loop if and only if $(L,\cdot)$ is a middle Bol loop, $A(L)\le\Phi(L,\cdot)\cap\mathcal{M}_\rho(L,\cdot)$
and $A(L)$ is an abelian group.
\end{mycor}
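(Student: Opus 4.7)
The plan is to present this corollary as a repackaging of Theorem~\ref{davidthm5a}. Two of the four conditions in that theorem, namely that $(L,\cdot)$ is a middle Bol loop and that $A(L)$ is abelian, already appear verbatim as conditions in the corollary. So everything reduces to showing that condition~(3) of the theorem (existence of the representation $\delta=R_{(z\backslash x)}R_z^{-1}$ for each $\delta\in A(L)$) together with condition~(4) ($\beta\in\Phi(L,\cdot)$ with $\beta'=\alpha\in\Psi(L,\cdot)$ for each $(\alpha,\beta)\in A(L)^2$) is equivalent to the single inclusion $A(L)\le\Phi(L,\cdot)\cap\mathcal{M}_\rho(L,\cdot)$.

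For the forward direction I would read off each inclusion separately. Condition~(3) exhibits every $\delta\in A(L)$ as a product of a right translation and the inverse of a right translation, putting $\delta\in\mathcal{M}_\rho(L,\cdot)$ and giving $A(L)\le\mathcal{M}_\rho(L,\cdot)$. Condition~(4) directly asserts $\beta\in\Phi(L,\cdot)$ for each $\beta\in A(L)$, yielding $A(L)\le\Phi(L,\cdot)$. For the reverse direction I would trace back through the derivation inside Lemma~\ref{davidlem17a}: the equation $\delta=R_{(z\backslash x)}R_z^{-1}$ was produced there precisely as a rewriting of $(\delta^{-1},\mathcal{R}_x,I)\in ATP(L,\cdot)$, that is, of the $\mu$-regularity of $\delta^{-1}$ with an adjoint of the explicit shape $\mathcal{R}_x^{-1}$. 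Given $A(L)\le\Phi(L,\cdot)\cap\mathcal{M}_\rho(L,\cdot)$ and using that $\Phi(L,\cdot)$ is a group (so $\delta^{-1}\in\Phi(L,\cdot)$), one recovers this autotopism and hence condition~(3); condition~(4) follows immediately from $A(L)\le\Phi(L,\cdot)$, with the adjoint relation interpreted as in Corollary~\ref{davidcor23b}. A final appeal to Theorem~\ref{davidthm5a} closes the argument.

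The main obstacle is that condition~(4) of Theorem~\ref{davidthm5a} is phrased with an ambiguous universal quantifier over $(\alpha,\beta)\in A(L)^2$; since the adjoint of a $\mu$-regular map is unique, that statement must be read consistently with its parent Corollary~\ref{davidcor23b}, where the natural meaning is that for every $\beta\in A(L)$ the adjoint $\beta'$ lies in $A(L)\cap\Psi(L,\cdot)$. Once this interpretive point is settled, the translation between the structural language of $\Phi(L,\cdot)$ and $\mathcal{M}_\rho(L,\cdot)$ and the explicit form $R_{(z\backslash x)}R_z^{-1}$ is essentially bookkeeping that cleanly absorbs into the statement of the corollary.
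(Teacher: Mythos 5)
Your top-level strategy coincides with the paper's: the printed proof of this corollary is literally ``Apply Theorem~\ref{davidthm5a}'', so deriving the corollary by matching the four conditions of that theorem against the three conditions stated here is exactly the intended route. Your forward direction is also sound: condition (3) of Theorem~\ref{davidthm5a} exhibits each $\delta\in A(L)$ as $R_{(z\backslash x)}R^{-1}_z\in\mathcal{M}_\rho(L,\cdot)$, and condition (4) gives $A(L)\le\Phi(L,\cdot)$.

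The gap is in the reverse direction, which is the only place anything has to be proved, and your sketch does not close it. From $\delta^{-1}\in\Phi(L,\cdot)$ you obtain an autotopism $(\delta^{-1},V,I)$ for \emph{some} bijection $V$; condition (3) requires the much more specific fact that $V$ can be taken to be $\mathcal{R}_x\colon z\mapsto z\backslash x$ for some $x\in L$, which is precisely what produces $\delta=R_{(z\backslash x)}R^{-1}_z$ in the proof of Lemma~\ref{davidlem17a}. Nothing in the hypothesis $A(L)\le\Phi(L,\cdot)\cap\mathcal{M}_\rho(L,\cdot)$ pins the adjoint down to this form: membership in $\mathcal{M}_\rho(L,\cdot)$ only writes $\delta$ as an arbitrary word in right translations and their inverses, not as a two-letter word $R_uR^{-1}_z$ (which is what condition (3) amounts to, taking $x=zu$), and membership in $\Phi(L,\cdot)$ says nothing about the shape of the unique adjoint. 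Likewise, condition (4), read as in Corollary~\ref{davidcor23b}, i.e. $(\beta,\alpha^{-1},I)\in ATP(L,\cdot)$ for $\alpha,\beta\in A(L)$, demands that the adjoint of $\beta$ be an element of $A(L)$; the bare inclusion $A(L)\le\Phi(L,\cdot)$ supplies $\beta$ with \emph{some} adjoint but gives no reason for that adjoint to lie in $A(L)$, so your claim that ``(4) follows immediately from $A(L)\le\Phi(L,\cdot)$'' is unjustified even under your own reading of the quantifier. In short, the equivalence of conditions (3)--(4) with $A(L)\le\Phi(L,\cdot)\cap\mathcal{M}_\rho(L,\cdot)$ is the entire content of the corollary beyond Theorem~\ref{davidthm5a}; only one implication is true on its face, and your proposal (like the paper's one-line proof) leaves the converse implication unproven --- it needs either an actual argument recovering the specific autotopisms from the group-theoretic inclusions, or a restatement of the corollary with the weaker conclusion that the inclusions are merely necessary.
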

\begin{proof}
Apply Theorem~\ref{davidthm5a}.
\end{proof}

\begin{mylem}\label{davidthm6}
Let $(L,\cdot,/,\backslash)$ be a commutative loop such that $\delta=\delta(x,z)=R_{(z\backslash x)}R^{-1}_z$ for each $\delta\in A(L)$ and for arbitrarily fixed $x,z\in L$. $(H,\circ )=H(L,\cdot)$ is a middle Bol loop if and only if $(L,\cdot)$ is a middle Bol loop and $\mathcal{R}^{-1}_wR_{y\delta}=R_yR^{-1}_wR_{w\delta} ~\forall ~w,y\in L$ and $\delta\in A(L)$.
\end{mylem}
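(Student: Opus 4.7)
I plan to mirror the strategy used in Lemma~\ref{davidlem17a} and Lemma~\ref{davidthm5}: start from characterization (d) of Lemma~\ref{davidlem16}, which says ``$H(L,\cdot)$ is middle Bol'' if and only if
$$T_{x,\delta}:=\langle\delta^{-1}\mathbb{L}_x\delta,\mathcal{R}_x,\mathcal{R}_xL_{x\delta}\rangle\in AATP(L,\cdot)\quad \forall\, x\in L,\ \delta\in A(L).$$
The central tool is the decomposition
$$T_{x,\delta}=\underbrace{(\delta^{-1},\mathcal{R}_x,I)}_{A_{x,\delta}}\circ\underbrace{\langle\mathbb{L}_x,\mathcal{R}_x,\mathcal{R}_xL_x\rangle}_{M_x}\circ\underbrace{(\delta,\mathcal{R}^{-1}_x,L^{-1}_xL_{x\delta})}_{B_{x,\delta}}$$
already isolated inside the proof of Lemma~\ref{davidlem17a}. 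From the calculations there one reads $A_{x,\delta}\in ATP(L,\cdot)\iff \delta=R_{(z\backslash x)}R_z^{-1}$ (the standing hypothesis), $M_x\in AATP(L,\cdot)\iff (L,\cdot)$ is middle Bol, and, using commutativity, $B_{x,\delta}\in ATP(L,\cdot)\iff \mathcal{R}^{-1}_wR_{y\delta}=R_yR^{-1}_wR_{w\delta}$ for all $w,y\in L$.

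A mild but decisive observation is that in any commutative loop the defining identity $xU\cdot yV=(xy)W$ of $ATP$ coincides with the defining identity $xU\cdot yV=(yx)W$ of $AATP$, so $ATP(L,\cdot)=AATP(L,\cdot)$; this makes the set closed under componentwise composition and componentwise inversion. Hence the $(\Leftarrow)$ direction is immediate: the standing hypothesis, the middle Bol assumption on $(L,\cdot)$, and condition~2 respectively place $A_{x,\delta},M_x,B_{x,\delta}$ in $ATP$, so $T_{x,\delta}\in ATP=AATP$ and Lemma~\ref{davidlem16}(d) yields that $H(L,\cdot)$ is middle Bol.

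For the $(\Rightarrow)$ direction I would first specialize $\delta$ to the identity automorphism in characterization (b) of Lemma~\ref{davidlem16}; the displayed identity collapses to $x\cdot(y\cdot z\backslash x)=(x/z)\cdot(y\backslash x)$, which is the middle Bol identity on $(L,\cdot)$ itself, so $(L,\cdot)$ is middle Bol and $M_x\in AATP$ at no extra cost. Combined with $A_{x,\delta}\in ATP$ (standing hypothesis) and $T_{x,\delta}\in AATP=ATP$ (assumption on $H(L,\cdot)$), the decomposition rearranges as
$$B_{x,\delta}=M_x^{-1}\circ A_{x,\delta}^{-1}\circ T_{x,\delta}\in ATP,$$
which, by Lemma~\ref{davidlem17a}'s reading of the third factor, is precisely the identity $\mathcal{R}^{-1}_wR_{y\delta}=R_yR^{-1}_wR_{w\delta}$.

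The only point where a genuine obstacle might appear is the separation of $M_x$ from $B_{x,\delta}$ in the necessary direction. I sidestep it by extracting $M_x\in AATP$ directly from the $\delta=I$ specialization of (b) in Lemma~\ref{davidlem16}; once that is in hand, the remaining algebra is a routine cancellation inside the group $ATP(L,\cdot)=AATP(L,\cdot)$, and no ad hoc unwinding of anti-autotopism identities is required.
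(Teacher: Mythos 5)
Your proposal is correct and follows essentially the same route as the paper: the paper proves this lemma by the same decomposition $\langle\delta^{-1}\mathbb{L}_x\delta,\mathcal{R}_x,\mathcal{R}_xL_{x\delta}\rangle=(\delta^{-1},\mathcal{R}_x,I)\circ\langle\mathbb{L}_x,\mathcal{R}_x,\mathcal{R}_xL_x\rangle\circ(\delta,\mathcal{R}^{-1}_x,L^{-1}_xL_{x\delta})$ and the same factor-by-factor readings established in Lemma~\ref{davidlem17a}, merely citing that proof as ``similar.'' Your additional explicit handling of the necessity direction (specializing $\delta=I$ in Lemma~\ref{davidlem16}(b) and rearranging inside $ATP(L,\cdot)=AATP(L,\cdot)$, valid by commutativity) is a sound filling-in of what the paper leaves implicit, not a different method.
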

\begin{proof}
This is similar to the proof of Lemma~\ref{davidthm5}.
\end{proof}

\begin{myth}\label{davidthm5b}
Let $(L,\cdot,/,\backslash)$ be a commutative loop such that $\delta=\delta(x,z)=R_{(z\backslash x)}R^{-1}_z$ for each $\delta\in A(L)$ and for arbitrarily fixed $x,z\in L$. $(H,\circ )=H(L,\cdot)$ is a commutative middle Bol loop if and only if
\begin{enumerate}
  \item $(L,\cdot)$ is a middle Bol loop;
  \item $A(L)$ is an abelian group;
  \item $\mathcal{R}^{-1}_wR_{y\delta}=R_yR^{-1}_wR_{w\delta}$ for all $w,y\in L$ and $\delta\in A(L)$; and
  \item $\beta\in\Phi(L,\cdot),~\beta'=\alpha\in\Psi(L,\cdot)$ for each $(\alpha,\beta)\in A(L)^2$.
\end{enumerate}
\end{myth}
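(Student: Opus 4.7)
The statement is a conjunction of two independent properties of $(H,\circ)$, namely that it is middle Bol and that it is commutative. My plan is to derive each property separately from results already in the paper and then combine them, exactly mirroring how Theorem~\ref{davidthm5a} was obtained from its two parent results.

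For the middle Bol half, I would invoke Lemma~\ref{davidthm6} directly. Under the standing hypothesis that every $\delta \in A(L)$ has the form $R_{(z\backslash x)}R^{-1}_z$ for arbitrarily fixed $x,z \in L$, that lemma asserts $H(L,\cdot)$ is middle Bol if and only if $(L,\cdot)$ is middle Bol and the right-translation identity $\mathcal{R}^{-1}_wR_{y\delta} = R_yR^{-1}_wR_{w\delta}$ holds for all $w,y \in L$ and $\delta \in A(L)$. This is exactly conditions (1) and (3) of the theorem, so the ``middle Bol'' half of the equivalence reduces to a single citation.

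For the commutativity half, $(L,\cdot)$ is already commutative by hypothesis, so I would apply Corollary~\ref{davidcor23b}, which characterizes commutativity of the holomorph of a commutative loop as: $A(L)$ is abelian, and for every pair $(\alpha,\beta) \in A(L)^2$ one has $\beta \in \Phi(L,\cdot)$ with adjoint $\beta' = \alpha \in \Psi(L,\cdot)$. These are precisely conditions (2) and (4). Since $H(L,\cdot)$ is a loop by Lemma~\ref{davidlem4.11a}, being a ``commutative middle Bol loop'' is literally the conjunction of the two properties we have just characterized, and the conjunction of the two biconditionals yields the theorem.

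The forward direction extracts each of (1)–(4) from the appropriate characterization applied to $H$; the converse feeds (1) and (3) into Lemma~\ref{davidthm6} to recover the middle Bol property and (2) and (4) into Corollary~\ref{davidcor23b} to recover commutativity. There is no real obstacle: the four conditions partition cleanly between the two invoked results with no cross-talk, and the standing hypothesis on $\delta$ is precisely the one needed to activate Lemma~\ref{davidthm6} in both directions.
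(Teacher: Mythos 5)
Your proposal is correct and matches the paper's own argument, whose proof is exactly the citation ``Use Lemma~\ref{davidthm6} and Corollary~\ref{davidcor23b}''; you have merely spelled out the decomposition of ``commutative middle Bol'' into the two biconditionals that those results supply. No gaps.
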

\begin{proof}
Use Lemma~\ref{davidthm6} and Corollary~\ref{davidcor23b}.
\end{proof}

\begin{mycor}\label{davidthm5b.1}
Let $(L,\cdot,/,\backslash)$ be a commutative loop such that $A(L)\in\mathcal{M}_\rho(L,\cdot)$. $(H,\circ )=H(L,\cdot)$ is a commutative middle Bol loop if and only if $(L,\cdot)$ is a middle Bol loop, $\mathcal{R}^{-1}_wR_{y\delta}=R_yR^{-1}_wR_{w\delta},~\forall~ w,y\in L$ and $\delta\in A(L)$, $A(L)\le\Phi(L,\cdot)$ and $A(L)$ is an abelian group.
\end{mycor}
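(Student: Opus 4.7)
The plan is to derive the corollary directly from Theorem~\ref{davidthm5b}. The two statements differ in exactly two places: the theorem's hypothesis $\delta=R_{(z\backslash x)}R_z^{-1}$ for each $\delta\in A(L)$ is replaced in the corollary by the inclusion $A(L)\le\mathcal{M}_\rho(L,\cdot)$, and the theorem's condition~(4) involving $\beta\in\Phi(L,\cdot)$ with adjoint $\beta'=\alpha\in\Psi(L,\cdot)$ is replaced by the cleaner set-level inclusion $A(L)\le\Phi(L,\cdot)$. My task is to check that these two pairs of conditions match up under each implication.

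For the forward direction, I would begin with $(H,\circ)=H(L,\cdot)$ being a commutative middle Bol loop. Using $A(L)\le\mathcal{M}_\rho(L,\cdot)$ to realise each $\delta\in A(L)$ in the two-factor form $R_{(z\backslash x)}R_z^{-1}$, I invoke Theorem~\ref{davidthm5b} to obtain conditions (i), (iii) of the corollary and the abelianness of $A(L)$. The theorem's condition~(4), together with Corollary~\ref{davidcor23b}, then yields the set-level inclusion $A(L)\le\Phi(L,\cdot)$ as stated.

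For the reverse direction, I assume conditions (i)--(iv) of the corollary and verify the hypotheses of Theorem~\ref{davidthm5b}. Conditions (1)--(3) of the theorem come straight from (i), (iv), (ii) of the corollary. For condition~(4), the inclusion $A(L)\le\Phi(L,\cdot)$ immediately supplies $\beta\in\Phi(L,\cdot)$; combining this with the hypothesis $A(L)\le\mathcal{M}_\rho(L,\cdot)$ and Corollary~\ref{davidcor23b} allows me to identify the adjoint $\beta'$ with an element $\alpha\in A(L)\le\Psi(L,\cdot)$. Theorem~\ref{davidthm5b} then certifies that $H$ is a commutative middle Bol loop.

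The main obstacle I expect is the precise bookkeeping in the equivalence between $A(L)\le\mathcal{M}_\rho(L,\cdot)$ and the specific factorization $\delta=R_{(z\backslash x)}R_z^{-1}$: a generic element of $\mathcal{M}_\rho(L,\cdot)$ is an arbitrary word in right translations and their inverses, whereas the theorem demands a product of exactly two. Collapsing such a word to this exact shape will exploit the commutativity of $L$ together with the adjoint relation coming from $A(L)\le\Phi(L,\cdot)$. A less delicate but still necessary step is the translation between the element-level statement $\beta'=\alpha\in\Psi(L,\cdot)$ of Theorem~\ref{davidthm5b}(4) and the set-level inclusion $A(L)\le\Psi(L,\cdot)$ used in the corollary.
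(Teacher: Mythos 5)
Your route is the same as the paper's: the printed proof of this corollary is literally a one-line application of Theorem~\ref{davidthm5b}, and your translation of that theorem's condition (4) into the set-level statement $A(L)\le\Phi(L,\cdot)$ with adjoints in $\Psi(L,\cdot)$ is the unproblematic part of the dictionary. (Also, as you implicitly assume, the ``$A(L)\in\mathcal{M}_\rho(L,\cdot)$'' in the statement must be read as $A(L)\le\mathcal{M}_\rho(L,\cdot)$.)

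The genuine gap is exactly the step you flag and postpone: recovering the standing hypothesis of Theorem~\ref{davidthm5b}, namely $\delta=\delta(x,z)=R_{(z\backslash x)}R_z^{-1}$ for each $\delta\in A(L)$, from the corollary's weaker hypothesis $A(L)\le\mathcal{M}_\rho(L,\cdot)$. Since that standing hypothesis is needed before either direction of the theorem's ``if and only if'' can be invoked, your whole derivation hinges on this collapse, and the repair you sketch does not work. A generic element of $\mathcal{M}_\rho(L,\cdot)$ is an arbitrary word in the generators $R_t^{\pm1}$; commutativity of $L$ only identifies $L_t$ with $R_t$ and gives no relation shortening such a word to the two-letter shape $R_uR_z^{-1}$, and the adjoint relation from $A(L)\le\Phi(L,\cdot)$ is itself one of the conditions being characterized, not an available tool for the hypothesis. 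Moreover, the factorization is used in the proof of Lemma~\ref{davidlem17a} in the form $\delta^{-1}R_{(z\backslash x)}=R_z$ for all $z$ with a fixed $x$, i.e.\ $(\delta^{-1},\mathcal{R}_x,I)\in ATP(L,\cdot)$, so it says $\delta^{-1}$ is $\mu$-regular with adjoint $\mathcal{R}_x^{-1}$ --- a far stronger demand on $\delta$ than mere membership in the right multiplication group; only the trivial converse ($R_{(z\backslash x)}R_z^{-1}\in\mathcal{M}_\rho(L,\cdot)$) holds. To be fair, the paper's own ``Apply Theorem~\ref{davidthm5b}'' makes the same silent identification (as does Corollary~\ref{davidthm5a.1} with Theorem~\ref{davidthm5a}), so your proposal reproduces the intended argument; but the bridging step you promise is precisely the one that cannot be carried out as described, and an honest proof would either restore the factorization hypothesis in the corollary or argue directly from commutativity of $H$ and Lemma~\ref{davidlem16} that the maps $\delta\in A(L)$ have the required special form.
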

\begin{proof}
Apply Theorem~\ref{davidthm5b}.
\end{proof}
\subsection{Holomorph of a Middle Bol Loop and its Corresponding Right Bol Loop}
\begin{myth}\label{davidthm7}
Let $(Q,\cdot)$ be a right Bol loop with holomorph $(H,\odot)$ and $(Q,\ast)$ its corresponding middle Bol loop with holomorph $(H', \circledast)$. Then $(H, \odot)=(H', \circledast)$ if and only if $(Q,\cdot)$ is commutative.
\end{myth}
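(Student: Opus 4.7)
My plan is to reduce the equality of the two holomorphs to the pointwise equality of the multiplications $\cdot=\ast$ on $Q$, and then dispatch both directions via short manipulations with the right inverse property of right Bol loops. By Theorem~\ref{davidthm1}, the middle Bol loop associated to $(Q,\cdot)$ satisfies $x\ast y=(y\cdot xy^{-1})y$, and by Proposition~\ref{davidpro7} we have $ATP(Q,\cdot)=ATP(Q,\ast)$; specializing to autotopisms of the form $(\alpha,\alpha,\alpha)$ yields $AUT(Q,\cdot)=AUT(Q,\ast)$. Consequently $(H,\odot)$ and $(H',\circledast)$ are built on the common ground set $A(Q)\times Q$, so they coincide as loops iff their multiplications agree, i.e., iff $x\beta\cdot y=x\beta\ast y$ for every $\beta\in A(Q)$ and every $x,y\in Q$. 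Taking $\beta$ to be the identity automorphism collapses this to the single pointwise identity $xy=(y\cdot xy^{-1})y$ on $Q$.

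For the sufficiency direction I would assume $(Q,\cdot)$ commutative. A right Bol loop satisfies the right inverse property $(xy)y^{-1}=x$; combined with commutativity this rearranges to $y^{-1}(yx)=x$, which is the left inverse property. Then $y\cdot xy^{-1}=y\cdot y^{-1}x=x$, so $(y\cdot xy^{-1})y=xy$, and therefore $\ast=\cdot$, from which $(H,\odot)=(H',\circledast)$ follows.

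For the necessity direction I would start from $xy=(y\cdot xy^{-1})y$ for all $x,y\in Q$. Post-multiplying both sides on the right by $y^{-1}$ and invoking the right inverse property to cancel the trailing $y$ gives $x=y\cdot xy^{-1}$. Substituting $x\mapsto ay$ and using the right inverse property once more to evaluate $(ay)y^{-1}=a$ turns this into $ay=y\cdot a$, yielding commutativity of $(Q,\cdot)$.

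The only conceptually nontrivial step is the first one: it is Proposition~\ref{davidpro7} that upgrades the autotopism-group equality to an automorphism-group equality, and thereby makes the two holomorphs comparable as loops in the first place. Once that is secured, both directions amount to two-line identity manipulations using only the right inverse property available in any right Bol loop, so I do not anticipate any further obstacle.
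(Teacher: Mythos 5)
Your proposal is correct and follows essentially the same route as the paper: both use Proposition~\ref{davidpro7} to identify $AUT(Q,\cdot)=AUT(Q,\ast)$ so the two holomorphs share the ground set $A(Q)\times Q$, reduce the equality of holomorphs to $x\beta\cdot y=(y\cdot x\beta y^{-1})y$ (equivalently $\cdot=\ast$), and then use right cancellation together with the right inverse property of right Bol loops to show this holds precisely when $R_y=L_y$, i.e., when $(Q,\cdot)$ is commutative. Your setting $\beta=I$ and arguing elementwise is only a cosmetic variation of the paper's translation-form computation.
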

\begin{proof}
Let $(Q,\cdot)$ be a right Bol loop and $(Q,\ast)$ its corresponding middle Bol loop, then
\begin{equation}\label{33} x\ast y=(y\cdot xy^{-1})y
\end{equation}
$$(H,\odot)=H(Q,\cdot):(\alpha,x)\odot(\beta,y)=(\alpha\beta, x\beta\cdot y)~\textrm{and}$$
$$(H',\circledast)=H'(Q,\ast): (\gamma,x)\circledast(\delta,y)=(\gamma\delta, x\delta\ast y)$$
$$=(\gamma\delta, (y\cdot x\delta y^{-1})y).$$
Note that: $H=A(Q,\cdot)\times(Q,\cdot)$. By Proposition~\ref{davidpro7}, $AUT(Q,\cdot)=AUT(Q, \ast)$.
So, $$H'=A(Q,\ast)\times(Q,\ast)=A(Q,\cdot)\times(Q,\ast)=H.$$
Therefore, $$(H,\odot)=H(Q,\cdot):(\alpha,x)\odot(\beta,y)=(\alpha\beta, x\beta\cdot y)~\textrm{and}$$
 $$(H,\circledast)=H(Q,\ast): (\alpha,x)\circledast(\beta,y)=(\alpha\beta, x\beta\ast y)$$
\begin{equation}\label{34}
  =\Big(\alpha\beta, (y\cdot x\beta y^{-1})y\Big)
\end{equation}
Thus, $(H,\odot)=(H,\circledast)\iff $
$$x\beta\cdot y=(y\cdot x\beta y^{-1})y\iff$$
$$x\beta=y\cdot x\beta y^{-1}\iff y\backslash x\beta=x\beta y^{-1}.$$
In translation form, we have:
$$x\beta=(x\beta y^{-1})L_y\iff x\beta=x\beta R^{-1}_yL_y\iff$$
$$\beta=\beta R^{-1}_yL_y\iff R_y=L_y.$$
$$\therefore a\cdot y= y\cdot a.$$
\end{proof}
\begin{myrem}\label{davidrem6a}
Theorem~\ref{davidthm7} means that commutativity is a necessary and sufficient condition for holomorphic invariance under existing isostrophy between middle Bol loops and the corresponding right Bol loops.
\end{myrem}
\subsection{Right Combined Holomorph of a Middle Bol Loop}
Let $(Q,\cdot)$ and $(Q, \ast)$ be two loops with corresponding $A(Q)$-holomorphs $H(Q,\cdot)=(H,\odot)$ and $A'(Q,\ast)$-holomorph $H'(Q,\ast)=(H,\circledast)$. Let $\mathcal{H}=A(Q,\cdot)\bigcap A'(Q,\ast)\times Q$ and define $(\ast,\cdot)$ on $\mathcal{H}$ such that $\forall~ (\alpha, x), (\beta, y)\in \mathcal{H}$,
$$(\alpha,x)(\ast,\cdot)(\beta,y)=\{(\beta,y)\odot[(\alpha,x)\odot(\beta,y)^{-1}]\}\odot(\beta,y)$$
$(\mathcal{H},(\ast,\cdot))$ is called the right combined holomorph of $H(Q,\cdot)$ and $H'(Q,\ast)$.

\begin{myth}\label{davidthm8}
Let $(Q,\cdot)$ be a right Bol loop with holomorph $(H,\odot)$ and  its corresponding middle Bol loop $(Q,\ast)$ with holomorph $(H, \circledast)$. If $\Big(\mathcal{H},(\ast,\cdot)\Big)$ is the right combined holomorph of $H(Q,\cdot)$ and $H'(Q,\ast)$, then $\Big(\mathcal{H},(\ast,\cdot)\Big)=(H', \circledast)$ if and only if $\beta=\beta(y)=R^{-1}_yL_{y\alpha}L^{-1}_yR_y$ for any arbitrarily fixed $y\in Q$ and $\alpha\beta=\beta\alpha$ for all $\alpha,\beta\in A(Q,\cdot)$.
\end{myth}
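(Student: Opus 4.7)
The plan is to expand the product $(\alpha,x)(\ast,\cdot)(\beta,y)$ step by step using the definition of $\odot$ (the right Bol holomorph operation), simplify using the fact that elements of $A(Q,\cdot)$ are automorphisms, and then compare the result with $(\alpha,x)\circledast(\beta,y)$ computed via equation \eqref{34}. Both implications should follow from a single chain of equivalences.

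First I would determine $(\beta,y)^{-1}$ inside $(H,\odot)$: the equation $(\beta,y)\odot(\mu,v)=(\beta\mu,\,y\mu\cdot v)=(I,e)$ forces $\mu=\beta^{-1}$ and, using the right inverse property available in the right Bol loop together with $\beta^{-1}\in\mathrm{AUT}(Q,\cdot)$, $v=(y\beta^{-1})^{-1}=y^{-1}\beta^{-1}$. Next I would substitute to get $(\alpha,x)\odot(\beta,y)^{-1}=(\alpha\beta^{-1},\,x\beta^{-1}\cdot y^{-1}\beta^{-1})=(\alpha\beta^{-1},(xy^{-1})\beta^{-1})$, using that $\beta^{-1}$ preserves products. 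Then computing $(\beta,y)\odot[(\alpha,x)\odot(\beta,y)^{-1}]$ and pushing $\beta^{-1}$ outside once more gives $(\beta\alpha\beta^{-1},\,(y\alpha\cdot xy^{-1})\beta^{-1})$. Finally multiplying on the right by $(\beta,y)$ causes the outer $\beta^{-1}$ to cancel, leaving
$$(\alpha,x)(\ast,\cdot)(\beta,y)=\bigl(\beta\alpha,\,(y\alpha\cdot xy^{-1})\cdot y\bigr).$$

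Setting this equal to $(\alpha,x)\circledast(\beta,y)=(\alpha\beta,\,(y\cdot x\beta y^{-1})\cdot y)$ and matching components yields two conditions: the first components give $\beta\alpha=\alpha\beta$ for all $\alpha,\beta\in A(Q,\cdot)$, and cancelling $R_y$ in the second components gives $y\alpha\cdot xy^{-1}=y\cdot x\beta y^{-1}$. Rewriting this in translation form, and using the right inverse property $R_{y^{-1}}=R_y^{-1}$ of the right Bol loop, it becomes $xR_y^{-1}L_{y\alpha}=x\beta R_y^{-1}L_y$ for every $x\in Q$, which rearranges to $\beta=R_y^{-1}L_{y\alpha}L_y^{-1}R_y$. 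Since each step is an equivalence, the converse direction is automatic once the two stated conditions are assumed.

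The main technical obstacle will be keeping the automorphism "push-through" clean through the nested products: one has to apply $\beta^{-1}$ twice as a homomorphism and observe that the outer $\beta^{-1}$ cancels precisely against the final right-multiplication by $(\beta,y)$; this cancellation is what produces the striking formula and is the only non-mechanical part of the calculation. A secondary care point is justifying the unambiguous use of $y^{-1}$ in $(Q,\cdot)$, which is legitimate because right Bol loops possess two-sided inverses.
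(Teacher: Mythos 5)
Your proposal is correct and follows essentially the same route as the paper: compute $(\beta,y)^{-1}=(\beta^{-1},y^{-1}\beta^{-1})$, expand the right combined product to get $\bigl(\beta\alpha,\,(y\alpha\cdot xy^{-1})y\bigr)$ using the automorphism push-through and the final cancellation of $\beta^{-1}$, then equate with $(\alpha\beta,\,(y\cdot x\beta y^{-1})y)$ and translate into $\beta=R_y^{-1}L_{y\alpha}L_y^{-1}R_y$ together with $\alpha\beta=\beta\alpha$. Your write-up is in fact slightly more careful than the paper's, since you justify the inverse formula and the use of $R_{y^{-1}}=R_y^{-1}$ (right inverse property of right Bol loops) which the paper leaves implicit.
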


\begin{proof}
Let $(Q,\cdot)$ be a right Bol loop and $(Q,\ast)$ its corresponding middle Bol loop, then by \eqref{33},
$x\ast y=(y\cdot xy^{-1})y$. Following the argument from \eqref{33} to \eqref{34}, we have
$$(H,\odot)=H(Q,\cdot):(\alpha,x)\odot(\beta,y)=(\alpha\beta, x\beta\cdot y)~\textrm{and}$$
 $$(H,\circledast)=H(Q,\ast): (\alpha,x)\circledast(\beta,y)=(\alpha\beta, x\beta\ast y)$$
$$=\Big(\alpha\beta, (y\cdot x\beta y^{-1})y\Big).$$
It is easy to check that
\begin{equation}\label{35}
 (\beta,y)^{-1}=\Big(\beta^{-1},(y^{-1})\beta^{-1}\Big)
\end{equation}
Thus, $(\alpha,x)(\circledast,\cdot)(\beta,y)=\{(\beta,y)\odot[(\alpha,x)\odot(\beta,y)^{-1}]\}\odot(\beta,y)$
$$=\{(\beta,y)\odot(\alpha\beta^{-1}, x\beta^{-1}\cdot(y^{-1})\beta^{-1})\}\odot(\beta,y)$$
$$=(\beta\alpha\beta^{-1}\beta, [y\alpha\beta^{-1}(x\beta^{-1}\cdot(y^{-1})\beta^{-1})]\beta\cdot y)$$
$$=(\beta\alpha,[y\alpha(x\beta^{-1}\beta\cdot(y^{-1}))]\cdot y)$$
$$=\Big(\beta\alpha,[y\alpha\cdot(xy^{-1})]y\Big).$$
Therefore,
$$\Big(\mathcal{H},(\ast,\cdot)\Big)=(H', \circledast)\Leftrightarrow (\alpha,x)(\circledast,\cdot)(\beta,y)=(\alpha,x)\circledast(\beta,y)\iff$$
$$\Big(\alpha\beta, (y\cdot x\beta y^{-1})y\Big)=\Big(\beta\alpha,[y\alpha\cdot(xy^{-1})]y\Big)\iff$$
$$\alpha\beta=\beta\alpha, (y\cdot x\beta y^{-1})y=[y\alpha\cdot(xy^{-1})]y)\iff$$
$$\alpha\beta=\beta\alpha,\beta R^{-1}_yL_y=R^{-1}_yL_{y\alpha}\iff$$
$$\alpha\beta=\beta\alpha,\beta=R^{-1}_yL_{y\alpha}L^{-1}_yR_y.$$
\end{proof}
\begin{mycor}\label{davidthm8.1}
Let $(Q,\cdot)$ be a right Bol loop with holomorph $(H,\odot)$ and  its corresponding middle Bol loop $(Q,\ast)$ with holomorph $(H, \circledast)$. If $\Big(\mathcal{H},(\ast,\cdot)\Big)$ is the right combined holomorph of $H(Q,\cdot)$ and $H'(Q,\ast)$, then $\Big(\mathcal{H},(\ast,\cdot)\Big)=(H', \circledast)$ if and only if $A(Q,\cdot)\le\mathcal{M}(Q,\cdot)$ and $A(Q,\cdot)$ is abelian.
\end{mycor}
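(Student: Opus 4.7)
My plan is to derive this corollary directly from Theorem~\ref{davidthm8} by recasting each of its two equivalent conditions into the cleaner group-theoretic language used in the present statement. Recall that Theorem~\ref{davidthm8} asserts $\Big(\mathcal{H},(\ast,\cdot)\Big)=(H', \circledast)$ if and only if two clauses hold simultaneously: the commutation relation $\alpha\beta=\beta\alpha$ for every $\alpha,\beta\in A(Q,\cdot)$, and the translation identity $\beta=R^{-1}_yL_{y\alpha}L^{-1}_yR_y$ for every arbitrarily fixed $y\in Q$. The first clause is tautologically the statement that $A(Q,\cdot)$ is abelian, so that half of the corollary is immediate and requires no further argument.

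It then remains to establish that the translation identity is equivalent to the inclusion $A(Q,\cdot)\le\mathcal{M}(Q,\cdot)$. The forward direction is straightforward, and I would present it first: the expression $R^{-1}_yL_{y\alpha}L^{-1}_yR_y$ is, by construction, a word in the generators $\{L_z,L_z^{-1},R_z,R_z^{-1}:z\in Q\}$ of the multiplication group, so every $\beta\in A(Q,\cdot)$ must lie in $\mathcal{M}(Q,\cdot)$, yielding the desired containment.

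The reverse implication --- extracting the specific conjugate form from the bare inclusion $A(Q,\cdot)\le\mathcal{M}(Q,\cdot)$ together with abelianness --- is where I expect the technical work to concentrate, and it is the step I anticipate will be the main obstacle. My intended route is to substitute an arbitrary $\beta\in A(Q,\cdot)\cap\mathcal{M}(Q,\cdot)$ back into the translation-form equation $y\cdot x\beta y^{-1}=y\alpha\cdot xy^{-1}$ produced inside the proof of Theorem~\ref{davidthm8}, and to verify --- using the right Bol identity~\eqref{davideq1} together with Proposition~\ref{davidpro7}, which identifies $AUT(Q,\cdot)$ with $AUT(Q,\ast)$ --- that the equation collapses onto precisely the form $\beta=R^{-1}_yL_{y\alpha}L^{-1}_yR_y$. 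The delicate point will be to pin down, for a given $\beta$, which $\alpha\in A(Q,\cdot)$ plays the role of its ``conjugation partner'': the natural candidate is the automorphism whose left-translation part in the $\mathcal{M}(Q,\cdot)$-factorisation of $\beta$ realises the conjugate $R_yL_{y\alpha}L^{-1}_yR_y^{-1}=\beta$. Once that correspondence is fixed, the rest of the argument reduces to routine manipulation and both sides of the stated equivalence match, completing the application of Theorem~\ref{davidthm8}.
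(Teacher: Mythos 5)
Your treatment of the abelian clause and your forward implication are fine: the word $R_y^{-1}L_{y\alpha}L_y^{-1}R_y$ visibly lies in $\mathcal{M}(Q,\cdot)$, so the translation condition of Theorem~\ref{davidthm8} forces $A(Q,\cdot)\le\mathcal{M}(Q,\cdot)$. The problem is the reverse implication, which you explicitly defer and never actually close. From the bare hypotheses that $A(Q,\cdot)$ is abelian and $A(Q,\cdot)\le\mathcal{M}(Q,\cdot)$, nothing in your sketch produces, for a given $\beta\in A(Q,\cdot)$, an automorphism $\alpha\in A(Q,\cdot)$ with $\beta=R_y^{-1}L_{y\alpha}L_y^{-1}R_y$. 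Your proposed candidate --- ``the automorphism whose left-translation part in the $\mathcal{M}(Q,\cdot)$-factorisation of $\beta$ realises the conjugate'' --- is not well defined: elements of $\mathcal{M}(Q,\cdot)$ are arbitrary words in the generators $L_z^{\pm 1}, R_z^{\pm 1}$, factorisations are far from unique, a generic word has no distinguished ``left-translation part,'' and even if one could write $\beta=R_y^{-1}L_wL_y^{-1}R_y$ for some $w\in Q$, there is no reason $w$ should have the form $y\alpha$ with $\alpha$ an automorphism. So the step you flag as the main obstacle is a genuine gap, not a routine manipulation, and Theorem~\ref{davidthm8} cannot be invoked until it is filled.

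For comparison, the paper does not fill it either: its entire proof of the corollary is the single line ``Apply Theorem~\ref{davidthm8},'' implicitly treating the condition $\beta=\beta(y)=R_y^{-1}L_{y\alpha}L_y^{-1}R_y$ as interchangeable with $A(Q,\cdot)\le\mathcal{M}(Q,\cdot)$. Only the direction you actually proved (the specific conjugate form implies membership in $\mathcal{M}(Q,\cdot)$) is immediate; the converse, as stated, would require a substantially stronger argument or a weakening of the corollary from an equivalence to an implication. You were right to isolate this as the delicate point, and more candid about it than the paper, but your proposal does not resolve it.
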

\begin{proof}
Apply Theorem~\ref{davidthm8}.
\end{proof}
\subsection{Holomorph of a Middle Bol Loop and its Corresponding Left Bol Loop}
\begin{myth}\label{davidthm7l}
Let $(Q,\cdot)$ be a left Bol loop with holomorph $(H,\odot)$ and $(Q,\ast)$ its corresponding middle Bol loop with holomorph $(H', \circledast)$. Then $(H, \odot)=(H', \circledast)$ if and only if $(Q,\cdot)$ is flexible.
\end{myth}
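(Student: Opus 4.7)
The plan is to mirror the proof of Theorem~\ref{davidthm7}, but substituting the left Bol formula $x\ast y=y(y^{-1}x\cdot y)$ from Theorem~\ref{davidthm1} in place of the right Bol formula used there. First I would invoke Proposition~\ref{davidpro7} to conclude that $AUT(Q,\cdot)=AUT(Q,\ast)$, so that $H'=A(Q,\ast)\times Q=A(Q,\cdot)\times Q=H$ as underlying sets. Writing out the two operations then gives
$$(\alpha,x)\odot(\beta,y)=(\alpha\beta,\,x\beta\cdot y)\qquad\textrm{and}\qquad(\alpha,x)\circledast(\beta,y)=\Big(\alpha\beta,\,y(y^{-1}\cdot x\beta\cdot y)\Big).$$

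The first components automatically match, so $(H,\odot)=(H',\circledast)$ reduces to the identity
$$x\beta\cdot y=y\big(y^{-1}\cdot x\beta\cdot y\big)$$
for all $x,y\in Q$ and all $\beta\in A(Q,\cdot)$. Since $A(Q,\cdot)$ contains the identity automorphism and every $\beta$ is a bijection, $x\beta$ sweeps out all of $Q$ as $x$ does, so this is equivalent to the single identity $uy=y(y^{-1}u\cdot y)$ holding for all $u,y\in Q$.

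To convert this into flexibility, I would use the fact that a left Bol loop has two-sided inverses and satisfies the left inverse property in both forms $y^{-1}(yu)=u$ and $y(y^{-1}u)=u$ (the second holding because $L_y$ is a bijection whose only inverse can be $L_{y^{-1}}$). Substituting $u=yv$, so that $y^{-1}u=v$, the condition becomes $(yv)y=y(vy)$, which is exactly flexibility; conversely, flexibility applied to $v=y^{-1}u$ combined with $y(y^{-1}u)=u$ recovers $uy=y(y^{-1}u\cdot y)$. The main point requiring care is the bookkeeping of the nested parentheses in the term $y(y^{-1}x\beta\cdot y)$ and the clean use of the two-sided LIP to absorb $y(y^{-1}u)$ into $u$; beyond that, the argument is a direct parallel of Theorem~\ref{davidthm7}, with commutativity replaced by flexibility precisely because the corresponding middle Bol formula now nests the variable $x$ between two copies of $y$ rather than between $y$ and $y^{-1}$.
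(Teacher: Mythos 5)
Your proposal is correct and follows essentially the same route as the paper: invoke Proposition~\ref{davidpro7} to identify $H'$ with $H$, reduce equality of the two holomorph products to the identity $x\beta\cdot y=y(y^{-1}x\beta\cdot y)$, and then use the left inverse property of the left Bol loop to show this is equivalent to flexibility. The only difference is cosmetic: the paper carries out the last step with translation maps (arriving at $L_yR_y=R_yL_y$), whereas you do it element-wise via the substitution $u=yv$.
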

\begin{proof}
Let $(Q,\cdot)$ be a left Bol loop and $(Q, \ast)$ its corresponding middle Bol loop,
\begin{equation}\label{36}
  x\ast y=y(y^{-1}x\cdot y)
\end{equation}
$$(H,\odot)=H(Q,\cdot):(\alpha,x)\odot(\beta,y)=(\alpha\beta, x\beta\cdot y)~\textrm{and}$$
$$(H',\circledast)=H'(Q,\ast): (\gamma,x)\circledast(\delta,y)=(\gamma\delta, x\delta\ast y)$$
$$=\Big(\gamma\delta, y(y^{-1}x\delta\cdot y)\Big).$$
Note that: $H=A(Q,\cdot)\times(Q,\cdot)$. By Proposition~\ref{davidpro7}, $AUT(Q,\cdot)=AUT(Q, \ast)$.
So, $$H'=A(Q,\ast)\times(Q,\ast)=A(Q,\cdot)\times(Q,\ast)=H.$$
Therefore, $(H,\odot)=H(Q,\cdot):(\alpha,x)\odot(\beta,y)=(\alpha\beta, x\beta\cdot y)~\textrm{and}$
 $$(H,\circledast)=H(Q,\ast): (\alpha,x)\circledast(\beta,y)=(\alpha\beta, x\beta\ast y)$$
 \begin{equation}\label{37}
   =\Big(\alpha\beta, y(y^{-1}x\beta\cdot y)\Big)
 \end{equation}
Thus, $$(H,\odot)=(H,\circledast)\iff x\beta\cdot y=y(y^{-1}x\beta\cdot y)\iff$$
$$\beta R_yL_{y^{-1}}=\beta L_{y^{-1}}R_y\iff L_yR_y=R_yL_y\iff (y\cdot a)y=y(a\cdot y).$$
\end{proof}
\begin{myrem}\label{davidrem7a}
Theorem~\ref{davidthm7l} means flexibility is a necessary and sufficient condition for holomorphic invariance under existing isostrophy between middle Bol loops and the corresponding left Bol loops.
\end{myrem}
\subsection{Left Combined Holomorph of a Middle Bol Loop}
Let $(Q,\cdot)$ and $(Q, \ast)$ be two loops with corresponding $A(Q)$-holomorphs $H(Q,\cdot)=(H,\odot)$ and $A'(Q,\ast)$-holomorph $H'(Q,\ast)=(H,\circledast)$. Let $\mathcal{H}=A(Q,\cdot)\bigcap A'(Q,\ast)\times Q$ and define $(\ast,\cdot)$ on $\mathcal{H}$ such that
$\forall~ (\alpha, x), (\beta, y)\in \mathcal{H}$,
$$(\alpha,x)[\ast,\cdot](\beta,y)=(\beta,y)\odot\{[(\beta,y)^{-1}\odot[(\alpha,x)]\odot(\beta,y)\}$$
$\Big(\mathcal{H},(\ast,\cdot)\Big)$ is called the left combined holomorph of $H(Q,\cdot)$ and $H'(Q,\ast)$.
\begin{myth}\label{davidthm9.1l}
Let $(Q,\cdot)$ be a left Bol loop with holomorph $(H,\odot)$ and  its corresponding middle Bol loop $(Q,\ast)$ with holomorph $(H, \circledast)$. If $(\mathcal{H},[\ast,\cdot])$ is the left combined holomorph of $H(Q,\cdot)$ and $H'(Q,\ast)$, then $(\mathcal{H},[\ast,\cdot])=(H', \circledast)$ if and only if $L^{-1}_yR_yL_y=L^{-1}_{y\phi}R_yL_{y\phi}$ if and only if $y^{-1}\phi\cdot y(zy)=(yz\cdot y^{-1}\phi)y~\forall ~y, z\in Q$ and $\phi\in A(Q,\cdot)$.
\end{myth}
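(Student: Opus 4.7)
The plan is to mirror the expansion technique of Theorems~\ref{davidthm7} and~\ref{davidthm8}. First I would evaluate $(\alpha,x)[\ast,\cdot](\beta,y)$ by applying $\odot$ three times: starting from $(\beta,y)^{-1}=(\beta^{-1},y^{-1}\beta^{-1})$ (equation~\eqref{35}), compute $(\beta,y)^{-1}\odot(\alpha,x)$, then $\odot(\beta,y)$ on the right, and finally $(\beta,y)\odot$ on the left. The automorphism component telescopes to $\alpha\beta$, and the loop component, using that an automorphism distributes over the loop operation, collapses to $y\beta^{-1}\alpha\beta\cdot[(y^{-1}\beta^{-1}\alpha\beta\cdot x\beta)\cdot y]$.

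Next I would compare this with $(\alpha,x)\circledast(\beta,y)=(\alpha\beta,\,y(y^{-1}x\beta\cdot y))$ from~\eqref{37}. The first components agree; equating the second and substituting $\phi=\beta^{-1}\alpha\beta$ (which ranges over all of $A(Q,\cdot)$ since conjugation by $\beta$ is a bijection of the group $A(Q,\cdot)$) and $w=x\beta$ (which ranges over all of $Q$) yields the necessary and sufficient identity
$$(y\phi)\cdot\big[(y^{-1}\phi\cdot w)\cdot y\big]=y(y^{-1}w\cdot y)\qquad\forall\;y,w\in Q,\;\phi\in A(Q,\cdot).$$

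To reach the translation-form condition $L_y^{-1}R_yL_y=L_{y\phi}^{-1}R_yL_{y\phi}$, I would rewrite this identity in terms of right and left multiplications: the LHS becomes $wL_{y^{-1}\phi}R_yL_{y\phi}$ and the RHS becomes $wL_{y^{-1}}R_yL_y$. Because a left Bol loop has the left inverse property, $L_{y^{-1}}=L_y^{-1}$, and since the automorphism $\phi$ preserves inverses we likewise have $L_{y^{-1}\phi}=L_{y\phi}^{-1}$. Cancelling the arbitrary $w$ then delivers the stated equality of operators.

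The equivalence with the element identity $y^{-1}\phi\cdot y(zy)=(yz\cdot y^{-1}\phi)y$ is what I expect to be the main obstacle: converting it to operators acting on $z$ gives $R_yL_yL_{y^{-1}\phi}=L_yR_{y^{-1}\phi}R_y$, which does not literally coincide with the conjugation operator in the previous step. I would bridge the two by evaluating the operator identity at a suitable $z$, using LIP to rewrite $L_{y^{-1}\phi}=L_{y\phi}^{-1}$, and invoking the left-Bol operator identity $L_xL_yL_x=L_{x(yx)}$ to move translations past $R_y$ until the element-level identity emerges. Once this is in place, the three statements of the theorem are equivalent by transitivity.
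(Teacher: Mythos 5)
Your expansion is exactly the paper's: from \eqref{35} you get $(\beta,y)^{-1}\odot(\alpha,x)=(\beta^{-1}\alpha,\,y^{-1}\beta^{-1}\alpha\cdot x)$, then multiplying by $(\beta,y)$ on the right and left yields $\big(\alpha\beta,\;y\beta^{-1}\alpha\beta\cdot[(y^{-1}\beta^{-1}\alpha\beta\cdot x\beta)\,y]\big)$; comparing with \eqref{37}, putting $\phi=\beta^{-1}\alpha\beta$, $w=x\beta$, and using the left inverse property ($L_{y^{-1}}=L_y^{-1}$, $L_{y^{-1}\phi}=L_{y\phi}^{-1}$) gives $L_y^{-1}R_yL_y=L_{y\phi}^{-1}R_yL_{y\phi}$. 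Up to this first equivalence your proposal is correct and coincides with the paper's argument (the paper cancels $\beta$ where you substitute $w=x\beta$).

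The genuine gap is precisely the step you flag and postpone: the passage to $y^{-1}\phi\cdot y(zy)=(yz\cdot y^{-1}\phi)y$, and your plan to bridge it with the left Bol identity $L_xL_yL_x=L_{x\cdot yx}$ cannot succeed, because that printed identity is not equivalent to the operator condition. Rearranging $L_y^{-1}R_yL_y=L_{y\phi}^{-1}R_yL_{y\phi}$ as $R_yL_yL_{y\phi}^{-1}=L_yL_{y\phi}^{-1}R_y$ and evaluating at $z$ gives, via LIP and $\phi\in A(Q,\cdot)$, the element identity $y^{-1}\phi\cdot y(zy)=(y^{-1}\phi\cdot yz)\,y$ --- note the factor $y^{-1}\phi$ multiplies $yz$ on the \emph{left}. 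The version with $(yz\cdot y^{-1}\phi)y$ is strictly stronger: taking $\phi=I$ (which lies in $A(Q,\cdot)$) and using LIP it reduces to $(yz\cdot y^{-1})y=zy$, i.e.\ $yz\cdot y^{-1}=z$, i.e.\ $R_{y^{-1}}=L_y^{-1}=L_{y^{-1}}$, which forces commutativity of $(Q,\cdot)$, whereas the operator condition is vacuous at $\phi=I$. So the third clause of Theorem~\ref{davidthm9.1l} as printed contains a misordered product; the paper's own proof never actually reaches it either --- it stops at the element form of the operator identity, and even there writes $a\cdot y^{-1}\beta^{-1}\alpha\beta$ where $aL_{y\phi}^{-1}=y^{-1}\phi\cdot a$ is required. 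The repair for your write-up is to prove the corrected identity $y^{-1}\phi\cdot y(zy)=(y^{-1}\phi\cdot yz)y$, which follows in one line from the rearranged operator identity evaluated at $z$ (no Bol identity needed), rather than to attempt a derivation of the statement as printed.
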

\begin{proof}
Let $(Q,\cdot)$ be a left Bol loop and $(Q, \ast)$ its corresponding middle Bol loop, then by \eqref{36}, $x\ast y=y(y^{-1}x\cdot y)$.
Following the argument from \eqref{36} to \eqref{37}, we have
$$(H,\odot)=H(Q,\cdot):(\alpha,x)\odot(\beta,y)=(\alpha\beta, x\beta\cdot y)~\textrm{and}$$
 $$(H,\circledast)=H(Q,\ast): (\alpha,x)\circledast(\beta,y)=(\alpha\beta, x\beta\ast y)$$
$$=(\alpha\beta, y(y^{-1}x\beta\cdot y).$$
Keeping \eqref{35} in mind, then we have
$$(\alpha,x)[\ast,\cdot](\beta,y)=(\beta,y)\odot\{[(\beta,y)^{-1}\odot[(\alpha,x)]\odot(\beta,y)\}$$
$$=(\beta,y)\odot\bigg\{\Big(\beta^{-1}\alpha, (y^{-1})\beta^{-1}\alpha\cdot x\Big)\odot(\beta,y)\bigg\}$$
$$=\bigg(\alpha\beta, y\beta^{-1}\alpha\beta\cdot[(y^{-1})\beta^{-1}\alpha\beta\cdot x\beta]y\bigg).$$
$\therefore,
(\mathcal{H},[\ast,\cdot])=(H', \circledast)\Leftrightarrow (\alpha,x)[\ast,\cdot](\beta,y)=(\alpha,x)\circledast(\beta,y)\iff$
$$(\alpha\beta, y(y^{-1}x\beta\cdot y))=(\alpha\beta, y\beta^{-1}\alpha\beta\cdot[(y^{-1})\beta^{-1}\alpha\beta\cdot x\beta]y)\iff$$
$$y(y^{-1}x\beta\cdot y)=y\beta^{-1}\alpha\beta\cdot[(y^{-1})\beta^{-1}\alpha\beta\cdot x\beta]y)\iff$$
$$\beta L^{-1}_{y}R_yL_y=\beta L^{-1}_{y\beta^{-1}\alpha\beta}R_yL_{y\beta^{-1}\alpha\beta}\iff$$
$$L^{-1}_{y}R_yL_y=L^{-1}_{y\beta^{-1}\alpha\beta}R_yL_{y\beta^{-1}\alpha\beta}\iff$$
$$a L^{-1}_{y}R_yL_y= a L^{-1}_{y\beta^{-1}\alpha\beta}R_yL_{y\beta^{-1}\alpha\beta}, \forall a\in Q.$$
$$y((y^{-1}\cdot a)y)=(a\cdot y^{-1}\beta^{-1}\alpha\beta)y\cdot L_{y\beta^{-1}\alpha\beta}$$
$$y\cdot((y^{-1}\cdot a)y)=y\beta^{-1}\alpha\beta\cdot((a\cdot y^{-1}\beta^{-1}\alpha\beta)y).$$
Denote $\beta^{-1}\alpha\beta$ by $\phi$, we have;
$$y\cdot\Big((y^{-1}\cdot a)y\Big)=y\phi\cdot(a\cdot y^{-1}\phi)y.$$
\end{proof}

\end{document}